\documentclass[preprint,12pt]{elsarticle}

\usepackage{amssymb,amsmath,amsthm}
\usepackage{amsfonts}
\usepackage{bbm}
\usepackage{etoolbox}
\usepackage{multirow}
\usepackage{enumerate}
\usepackage{enumitem}
\usepackage{url}

%tikz packages
\usepackage{tikz}
\usepackage{tkz-graph}
\usepackage{subfig}
                                                                          
\makeatletter
\def\@author#1{\g@addto@macro\elsauthors{\normalsize%

    \def\baselinestretch{1}%
    \upshape\authorsep#1\unskip\textsuperscript{%
      \ifx\@fnmark\@empty\else\unskip\sep\@fnmark\let\sep=,\fi
      \ifx\@corref\@empty\else\unskip\sep\@corref\let\sep=,\fi
      }%
    \def\authorsep{\unskip,\space}%
    \global\let\@fnmark\@empty
    \global\let\@corref\@empty  %% Added
    \global\let\sep\@empty}%
    \@eadauthor={#1}
}
\patchcmd{\ps@pprintTitle}{\footnotesize\itshape
       Preprint submitted to \ifx\@journal\@empty Elsevier
       \else\@journal\fi\hfill\today}{\relax}{}{}
\makeatother

\bibliographystyle{plain}

\theoremstyle{plain}

\newtheorem{theorem}{Theorem}

\newtheorem{lemma}[theorem]{Lemma}
\newtheorem{proposition}[theorem]{Proposition}

\newtheorem{remark}[theorem]{Remark}
\newtheorem{question}[theorem]{Question}
\newtheorem{corollary}[theorem]{Corollary}

%%\newproof{pop}{Proof of Proposition \ref{cmij}}
\newproof{pop}{Proof of Proposition \ref{counterq2}}
\newproof{pop1}{Proof of Proposition \ref{parity}}

\begin{document}    
    \title{On the spectral reconstruction problem for digraphs}
    
    \author[A]{Edward Bankoussou-mabiala}
    \ead{bankoussoumabiala@yahoo.fr}
    
    \author[A]{Abderrahim Boussa\"{\i}ri\corref{cor1}}  
    \cortext[cor1]{Corresponding author}
    \ead{aboussairi@hotmail.com}
    
    \author[A]{Abdelhak Cha\"{\i}cha\^{a}}
    \ead{abdelchaichaa@gmail.com}
    
    \author[A]{Brahim Chergui}
    \ead{cherguibrahim@gmail.com}
    
    \author[A]{Soufiane Lakhlifi}
    \ead{s.lakhlifi1@gmail.com}

\address[A]{Facult\'e des Sciences A\"in Chock,  D\'epartement de Math\'ematiques et Informatique, Laboratoire de Topologie, Alg\`ebre, G\'eom\'etrie et Math\'ematiques Discr\`etes, Universit\'e Hassan II

Km 8 route d'El Jadida,
BP 5366 Maarif, Casablanca, Maroc}
        \begin{frontmatter}
        \begin{abstract}	
			The idiosyncratic polynomial of a graph $G$ with adjacency matrix $A$ is the characteristic polynomial of the matrix $ A + y(J-A-I)$, where $I$ is the 
			identity matrix and $J$ is the all-ones matrix. It follows from a theorem of Hagos (2000) combined with an earlier result of Johnson and Newman (1980) that the idiosyncratic polynomial of a graph is reconstructible from the multiset of the idiosyncratic polynomial of its vertex-deleted subgraphs.
			For a digraph $G$ with adjacency matrix $A$, we define its idiosyncratic polynomial as the characteristic polynomial of the matrix $ A + y(J-A-I)+zA^{T}$.
			By forbidding two fixed digraphs on three vertices as induced subdigraphs, we prove
			that the idiosyncratic polynomial of a digraph is reconstructible from the multiset of the idiosyncratic polynomial of
			its induced subdigraphs on three vertices. As an immediate consequence, the
			idiosyncratic polynomial of a tournament is reconstructible from the collection of its
			$3$-cycles. Another consequence is that all the transitive orientations of a comparability graph
			have the same idiosyncratic polynomial. 
        
        \end{abstract}
        \begin{keyword}
            Digraph, reconstruction problem, idiosyncratic polynomial, hemimorphy, isomorphy, module.
        \end{keyword}

     \end{frontmatter}

\section{Introduction}

	%Let $G=(V,E)$ be a graph on $n$ vertices, with adjacency matrix $A$. The
	%\emph{characteristic polynomial} of $G$ is $P_{G}\left(  X\right)
	%=\det\left(  XI_{n}-A\right)  $ where $I_{n}$ is the identity matrix. We say
	%that two graphs are \emph{cospectral} if they have the same characteristic polynomial.
	
	Given a graph $G$, the subgraph obtained from $G$ by deleting a vertex $v$ and
	all its incident edges is called a \emph{vertex-deleted} \emph{subgraph}. The
	multiset of vertex-deleted subgraphs, given up to isomorphism, is called the
	\emph{deck} of $G$. We say that $G$ is \emph{reconstructible} if it is
	uniquely determined (up to isomorphism) by its deck. The well-known Graph
	Reconstruction Conjecture of Kelly \cite{kelly1957congruence} and Ulam
	\cite{ulam1960collection} states that all finite graphs on at least three
	vertices are reconstructible. A problem which is closely related to this conjecture is
	the reconstruction of graph invariant polynomials. We mean by a \emph{graph
	invariant} a function $\mathcal{I}$ from the set of all graphs into any
	commutative ring such that $\mathcal{I}(G)=\mathcal{I}(H)$ if $G$ and $H$ are
	two isomophic graphs. We say that a graph invariant is \emph{reconstructible}
	if it is uniquely determined by the deck. For example, Tutte
	\cite{tutte1979all} proved that the characteristic polynomial and the
	chromatic polynomial are reconstructible. A natural question is to ask if a
	graph invariant polynomial can be reconstructed from the polynomial deck, that
	is, from the multiset of the polynomials of the vertex-deleted subgraphs? For
	the characteristic polynomial the problem is still open. It was posed by
	Cvetkovic at the XVIII International Scientific Colloquium in Ilmenau in 1973.
	Hagos \cite{hagos2000characteristic} proved that the characteristic polynomial
	of a graph is reconstructible from its polynomial deck together
	with the polynomial deck of its complement. The \emph{idiosyncratic
	polynomial} of a graph $G$ with adjacency matrix $A$ is the characteristic polynomial of the matrix
	obtained by replacing each non-diagonal zero in $A$
	with an indeterminate $x$, that is, the characteristic polynomial of the matrix $ A + x(J-A-I)$. 
	Johnson and Newman \cite{johnson1980note} consider a slightly different polynomial which can be viewed as the idiosyncratic polynomial of the complement of $G$. It follows from their main theorem that two graphs have the same idiosyncratic polynomial
	if only if they are cospectral, and their complements are also cospectral.
	Then by Hagos' theorem, the idiosyncratic polynomial of a graph
	$G$ is recontructible from its idiosyncratic polynomial deck.
	
	The reconstruction conjecture was also considered for tournaments and more
	generally for digraphs. In this area,
    Stockmeyer \cite{stockmeyer1977falsity} construct for every positive integer $n$ two
	non isomorphic tournaments $B_{n}$ and $C_{n}$ on the same vertex set
	$\left\{  0,\ldots,2^{n}\right\}  $. For this he consider the tournament
	$A_{n}$ defined on $\left\{  1,\ldots,2^{n}\right\}  $ by $(i,j)$ is an arc of
	$A_{n}$ if only if $odd(j-i)\equiv1\pmod{4} $, where $odd(x)$ is the largest odd
	divisor of $x$. The tournaments
	$B_{n}$ and $C_{n}$ are obtained from $A_{n}$ by adding the vertex $0$. In the
	tournament $B_{n}$, the vertex $0$ dominates $2,4\ldots,2^{n}$ and is
	dominated by $1,3\ldots,2^{n}-1$. In the tournament $C_{n}$, the vertex $0$
	dominates $1,3\ldots,2^{n}-1$ and is dominated by $2,4\ldots,2^{n}$. It is
	proved in \cite{stockmeyer1977falsity} that for $1\leq k\leq2^{n}$, the
	tournaments $B_{n}-$ $k$ and $C_{n}-$ $(2^{n}+1-k)$ are isomorphic. Then the
	pair $B_{n}$ and $C_{n}$ form a counterexample for the reconstruction
	conjecture. As mentioned by Pouzet \cite{pouzet1979note}, Dumont checked
	that for $n\leq6$ the difference (in absolute value) between the determinants
	of $B_{n}$ and $C_{n}$ is $1$. This fact is perhaps true for arbitrary $n$ but
	we are not able to prove it. However, we have the following result.
	
	\begin{proposition}
		\label{parity}For $n\geq3$, the determinants of $B_{n}$ and $C_{n}$ do not
		have the same parity.
	\end{proposition}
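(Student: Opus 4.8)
The plan is to work modulo $2$ and to reduce the parity statement to a single determinantal congruence about the adjacency matrix of $A_n$. Write $N=2^{n}$, let $M$ denote the adjacency matrix of $A_n$ (indexed by $\{1,\dots,N\}$), and let $e,o\in\{0,1\}^{N}$ be the indicator vectors of the even and of the odd vertices, so that $e+o=\mathbf 1$. Ordering the vertices with $0$ first and identifying $B_n,C_n$ with their adjacency matrices, we have
\[
 B_n=\begin{pmatrix} 0 & e^{T}\\ o & M\end{pmatrix},\qquad
 C_n=\begin{pmatrix} 0 & o^{T}\\ e & M\end{pmatrix}.
\]
First I would record the bordered-determinant identity $\det\begin{pmatrix} 0 & u^{T}\\ w & M\end{pmatrix}=-\,u^{T}\mathrm{adj}(M)\,w$, valid over any commutative ring, so that $F(u,w):=-\,u^{T}\mathrm{adj}(M)\,w$ is a bilinear form with $\det B_n=F(e,o)$ and $\det C_n=F(o,e)$.

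Next, expanding $F(\mathbf 1,\mathbf 1)=F(e+o,e+o)$ by bilinearity gives
\[
 \det B_n+\det C_n=F(e,o)+F(o,e)=F(\mathbf 1,\mathbf 1)-F(e,e)-F(o,o),
\]
where $F(\mathbf 1,\mathbf 1)=\det D$ with $D=\begin{pmatrix} 0 & \mathbf 1^{T}\\ \mathbf 1 & M\end{pmatrix}$. The structural input is the self-converse symmetry of $A_n$: the map $\phi\colon i\mapsto N+1-i$ is an isomorphism from $A_n$ onto its converse, so the associated anti-diagonal (symmetric, involutive) permutation matrix $P$ satisfies $PMP=M^{T}$, hence $P\,\mathrm{adj}(M)\,P=\mathrm{adj}(M)^{T}$. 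Since $\phi$ interchanges the two parity classes we have $Pe=o$, which yields $F(o,o)=-o^{T}\mathrm{adj}(M)o=-e^{T}\mathrm{adj}(M)^{T}e=F(e,e)$; thus $F(e,e)+F(o,o)$ is even and $\det B_n+\det C_n\equiv \det D\pmod 2$. Applying the matrix-determinant lemma to $D$ gives $\det D=\det M-\det(M+J)$, and reducing the tournament identity $M+M^{T}=J-I$ modulo $2$ gives $M+J\equiv M^{T}+I$, so $\det(M+J)\equiv\det(M+I)$. Hence the proposition is equivalent to
\[
 \det M+\det(M+I)\equiv 1\pmod 2,
\]
i.e. to the assertion that over $\mathbb F_2$ exactly one of $M$ and $M+I$ is invertible.

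This last congruence is the heart of the matter and the only place where the arithmetic definition of $A_n$ is used. I would prove it by induction on $n$ via the self-similar structure of $A_n$: restricted to the even vertices $\{2,4,\dots,N\}$ and to the odd vertices $\{1,3,\dots,N-1\}$, $A_n$ induces two copies of $A_{n-1}$ (through $2a\mapsto a$ and $2a-1\mapsto a$), while the arcs between the two classes depend only on the indices modulo $2$ and therefore form blocks of rank at most $2$. Writing $M$ in the corresponding $2\times2$ block form, with equal diagonal blocks $M_{n-1}$ (the adjacency matrix of $A_{n-1}$) and low-rank off-diagonal blocks, I would evaluate $\det M$ and $\det(M+I)$ over $\mathbb F_2$ by Schur complementation, expressing them through $\det M_{n-1}$, $\det(M_{n-1}+I)$ and a bounded number of auxiliary bordered determinants that the self-converse symmetry constrains. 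The base cases $n=1,2,3$ are checked directly and give the alternating values $(\det M,\det(M+I))\equiv(0,1),(1,0),(0,1)$, and the induction should propagate the invariant $\det M+\det(M+I)\equiv1$. The main obstacle is exactly this step: handling the rank-$\le2$ cross blocks and the resulting auxiliary determinants cleanly enough that the two-term invariant is preserved. Everything preceding it is formal linear algebra valid for any self-converse tournament whose symmetry swaps the two halves.
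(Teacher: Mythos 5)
Your linear-algebraic reduction is correct as far as it goes: the bordered-determinant identity, the bilinearity argument giving $\det B_n+\det C_n\equiv F(\mathbf 1,\mathbf 1)\pmod 2$, the use of the anti-diagonal symmetry $i\mapsto 2^{n}+1-i$ (which swaps the parity classes and conjugates $M$ to $M^{T}$) to kill $F(e,e)+F(o,o)$ modulo $2$, and the final rewriting $F(\mathbf 1,\mathbf 1)=\det M-\det(M+J)\equiv\det M+\det(M+I)\pmod 2$ are all valid. But the proof is not complete. Everything now rests on the congruence $\det M+\det(M+I)\equiv 1\pmod 2$ for the adjacency matrix $M$ of $A_n$, and you explicitly leave this as ``the main obstacle,'' offering only a sketched induction. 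This is a genuine gap, not a routine verification: the congruence fails for general tournaments (for a $3$-cycle dominated by a fourth vertex, both $\det M$ and $\det(M+I)$ are even over $\mathbb{F}_2$), so it really does depend on the arithmetic of $A_n$; and the proposed induction through the two embedded copies of $A_{n-1}$ with rank-$\le 2$ cross blocks is precisely the kind of computation that may not close, since the Schur complements over $\mathbb{F}_2$ need not be controlled by $\det M_{n-1}$ and $\det(M_{n-1}+I)$ alone. Until that step is actually carried out, the proposition is not proved.

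For comparison, the paper closes the argument with a combinatorial input that has no counterpart in your write-up. Since $B_n$ and $C_n$ are hypomorphic, Pouzet's identity gives $\det B_n-\det C_n=\pm\bigl[C(B_n)-C(C_n)\bigr]$, where $C(\cdot)$ counts Hamiltonian cycles; deleting the vertex $0$ identifies $C(B_n)$ and $C(C_n)$ with $\lvert\mathcal P_{e,o}\rvert$ and $\lvert\mathcal P_{o,e}\rvert$, the numbers of Hamiltonian paths of $A_n$ joining the two parity classes in either direction; the very same symmetry $i\mapsto 2^{n}+1-i$ that you exploit yields a bijection between $\mathcal P_{o,o}$ and $\mathcal P_{e,e}$; and R\'edei's theorem (the number of Hamiltonian paths of a tournament is odd) then forces $\lvert\mathcal P_{e,o}\rvert+\lvert\mathcal P_{o,e}\rvert$, hence $\det B_n-\det C_n$, to be odd. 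If you want to salvage your route, the most promising path is to translate $\det M+\det(M+I)$ modulo $2$ into a count of cycle covers or Hamiltonian paths of $A_n$ and invoke R\'edei's theorem there, rather than fighting the block induction.
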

	
	Fra\"{\i}ss\'{e} \cite{fraisse1970abritement} considered a strengthening of the
	reconstruction conjecture for the class of relations which contains graphs
	and digraphs. For digraphs, Fra\"{\i}ss\'{e}'s problem can be stated as
	follow. Let $G$ and $H$ be two digraphs with the same vertex set $V$ and assume
	that for every proper subset $W$ of $V$, the subdigraphs $G\left[  W\right]  $
	and $H\left[  W\right]  $, induced by $W$ are isomorphic. Is it true that $G$
	and $H$ are isomorphic? Lopez \cite{lopez1978indeformabilite} proved that the
	answer is positive when $\left\vert V\right\vert \geq7$. It follows that if
	$G\left[  W\right]  $ and $H\left[  W\right]  $ are isomorphic for every
	subset $W$ of size at most $6$, then $G$ and $H$ are isomorphic.
	Motivated by Lopez's theorem, 
	we can ask the following question.
	 
	\begin{question}\label{invariant}
		Let $\mathcal{I}$ be a digraph invariant polynomial and let $G$ be a digraph. Is the polynomial $\mathcal{I}(G)$ reconstructible from the 
		collection $\{\mathcal{I}(H): H\,\in \mathcal{H}\}$, where
		$\mathcal{H}$ is the set of proper induced subdigraphs of $G$?
	\end{question}
	 
	 In this paper, we will address this question for idiosyncratic polynomial extended to digraphs and defined as follow.
	 Let $G$ be a digraph with adjacency matrix $A$. The \emph{generalized adjacency matrix} of $G$ is $A(y,z)=A + y(J-A-I)+zA^{T}$. The idiosyncratic polynomial of $G$ as 
	 the characteristic polynomial of $A(y,z)$. The presence of
	 $zA^{T}$ comes from the fact that the adjacency matrix of a digraph is not necessarily symmetric.
	 It is not difficult to see that if two digraphs have the same idiosyncratic polynomial then they have the same characteristic polynomial, moreover their complement and their converse are also the same characteristic polynomial.
	  
	We prove that Question \ref{invariant} is not true for arbitrary digraphs.
	Our counterexamples are borrowed from \cite{boussairi2004c3} where they have been used
	in another context. All of these counterexamples contain one of two particular digraphs
	called \emph{flag}. Following \cite{boussairi2004c3} a \emph{flag}  is a digraph with
	 vertex set $\{u,v,w\}$ and whose arcs set is either $\left\{  \left(  u,v\right)
	,\left(  u,w\right)  ,\left(  w,u\right)  \right\}  $ or $\left\{  \left(
	v,u\right)  ,\left(  u,w\right)  ,\left(  w,u\right)  \right\}  $. A
	\emph{flag-free} \emph{digraph} is a digraph in which there is no flag as
	induced subdigraph. 
	
	Our main result is stated as follow.
	
	\begin{theorem}
	\label{maintheorem} Let $G$ and $H$ be two flag-free digraphs with the same
	vertex set $V$ of size at least $5$. If for every $3$-subset $W$ of $V$, the
	induced subdigraphs $G\left[  W\right]  $ and $H\left[  W\right]  $ have the
	same  idiosyncratic polynomial, then $G$ and $H$ have the same  idiosyncratic polynomial.
	\end{theorem}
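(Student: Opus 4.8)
The plan is to compare the two idiosyncratic polynomials coefficient by coefficient. Writing $M_G=A_G(y,z)$ and $M_H=A_H(y,z)$, the idiosyncratic polynomial of $G$ is
\begin{equation*}
\det\bigl(xI-M_G\bigr)=\sum_{k=0}^{|V|}(-1)^{k}x^{|V|-k}\,E_k(M_G),\qquad E_k(M_G)=\sum_{\substack{S\subseteq V\\ |S|=k}}\det\bigl(M_G[S]\bigr),
\end{equation*}
and likewise for $H$. The key observation is that the principal submatrix $M_G[S]$ is exactly the generalized adjacency matrix $A_{G[S]}(y,z)$, because $J$, $I$ and the operation $A\mapsto A^{T}$ all commute with restriction to $S$. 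Hence $\det(M_G[S])$ is, up to the sign $(-1)^{|S|}$, the constant-in-$x$ coefficient of the idiosyncratic polynomial of $G[S]$, and it suffices to prove $E_k(M_G)=E_k(M_H)$ for every $k$. For $k=3$ this is immediate, since the hypothesis gives $\det(M_G[W])=\det(M_H[W])$ for every $3$-subset $W$, so the sums defining $E_3$ agree term by term. For $k\le 2$ I would sum the common polynomials $\det(xI-M_G[W])=\det(xI-M_H[W])$ over all $3$-subsets $W$ and extract coefficients: since each $k$-subset lies in exactly $\binom{|V|-k}{3-k}$ triples (a nonzero multiplicity as $|V|\ge 5$), this forces $E_1(M_G)=E_1(M_H)$ (both are $0$, the diagonal being zero) and $E_2(M_G)=E_2(M_H)$.

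Next I would set up the cycle-cover expansion that organizes the higher coefficients. As $M_G$ has zero diagonal, $\det(M_G[S])$ is a signed sum over the derangements of $S$, that is, over the families of pairwise vertex-disjoint directed cycles, each of length at least $2$, covering $S$. The weight of such a family factors as the product of its cycle weights, and the weight of a step $i\to j$ equals $1+z$, $1$, $y+z$ or $y$ according as the ordered pair $(i,j)$ spans a digon, a forward single arc, a backward single arc or a non-edge; this is made systematic by the decomposition $A(y,z)=(1+z)D+S+(y+z)S^{T}+yN$, with $D,N$ the symmetric digon and non-edge matrices and $S$ the single-arc matrix, so that $D+S+S^{T}+N=J-I$. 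Thus $E_k(M_G)$ is a sum, over all disjoint cycle families covering $k$ vertices, of products of cycle weights. Every family all of whose cycles have length at most $3$ contributes a product of $2$-cycle and $3$-cycle weights, and these are read off from the idiosyncratic polynomials of the $2$- and $3$-subsets; by the first paragraph such contributions are therefore equal for $G$ and $H$.

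The crux is the contribution of cycles of length at least $4$, whose weights are not recorded by the $3$-subset data, and this is precisely where the flag-free hypothesis enters. Being flag-free means that no triple simultaneously carries a digon $\{u,w\}$, a single arc joining $v$ to $u$, and a non-edge $\{v,w\}$; equivalently, once $\{u,w\}$ is a digon, any vertex $v$ joined to $u$ by a single arc must be joined to $w$ by a digon or a single arc. I would use this to prove a \emph{reduction lemma}: in a flag-free digraph the long-cycle part of each $E_k$ can be rewritten as a polynomial in the $2$-cycle and $3$-cycle weights. The intended mechanism is that the forbidden local patterns are exactly those mixed digon/single/non-edge transitions along a cycle that would let a long cycle carry information independent of its triangulating triples; ruling them out forces the cyclic sequence of arc-types to be ``triangulable,'' so the long-cycle generating sum collapses onto short-cycle weights, and these have already been shown to match for $G$ and $H$.

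The main obstacle is exactly this reduction lemma: proving, monomial by monomial in $y$ and $z$, that under flag-freeness the total weight of the length-$\ell$ cycles ($\ell\ge 4$) is determined by shorter cycles, \emph{and} that it is expressible through the specific quantities the $3$-subset data make available (the per-triple sums of the two $3$-cycle weights and of the three $2$-cycle weights). I expect this to require a careful case analysis on the cyclic pattern of arc-types, in which each mixed transition is shown either to be forbidden by the flag condition or to factor through a triangle. Once every long-cycle pattern is reduced in this way, $E_k(M_G)=E_k(M_H)$ for all $k\ge 4$ follows from the agreement of the short-cycle weights established above, completing the proof of Theorem \ref{maintheorem}.
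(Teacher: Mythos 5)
Your setup---expanding $\det(xI-M_G)$ into principal minors $E_k$, identifying $M_G[S]$ with the generalized adjacency matrix of $G[S]$, and recovering $E_1,E_2,E_3$ from the $3$-subset data by averaging---is correct. But the entire content of the theorem sits in your ``reduction lemma,'' and you have not proved it; you have only described what it would have to say. The claim that for flag-free digraphs the total $(y,z)$-weight of the disjoint cycle families using a cycle of length $\ell\ge 4$ ``collapses onto'' the $2$- and $3$-cycle weights is a strong local-to-global statement, and nothing in the proposal indicates how the case analysis on cyclic arc-type patterns would actually close. The counterexample in Section 4 of the paper is a warning sign for this strategy: there the digraphs $G$ and $G'$ agree on the idiosyncratic polynomials of \emph{all} proper induced subdigraphs, yet their full polynomials differ by exactly one Hamiltonian-cycle term. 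So the long-cycle contribution is genuinely \emph{not} a function of the short-cycle data in general, and whatever mechanism flag-freeness provides must do more than forbid a few transition patterns along a single cycle---it must control how cycles of all lengths interact across the whole digraph. As written, your argument would go through verbatim for $G$ and $G'$ up to the point where the reduction lemma is invoked, which shows the missing lemma is carrying all of the difficulty.

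The paper takes a completely different route that avoids this issue. It first observes (Lemma \ref{3idio}) that two $3$-vertex digraphs have the same idiosyncratic polynomial if and only if they are hemimorphic, so the hypothesis is equivalent to $3$-hemimorphy; an inclusion--exclusion count upgrades this to $\{2,3\}$-hemimorphy. The flag-free hypothesis is then spent on a structure theorem from \cite{boussairi2004c3} (Theorem \ref{Biltnew}): two $\{2,3\}$-hemimorphic flag-free digraphs are linked by a finite chain of inversions of modules. Finally, a single matrix lemma (Proposition \ref{HLowey}, after Hartfiel--Loewy) shows that inverting a module---i.e.\ transposing a principal block whose off-diagonal complements are rank-one---preserves the characteristic polynomial of the generalized adjacency matrix. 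If you want to salvage your approach, you would essentially have to reprove that structure theorem inside your cycle expansion, which is a substantial piece of combinatorics in its own right; the shorter path is to prove your $E_k$ identities as a corollary of invariance under module inversion rather than the other way around.
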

	
	As an application, we obtain the following corollary about tournaments.
	
	\begin{corollary}
	\label{maincas1}Two tournaments with the same $3$-cycles have the same
	 idiosyncratic polynomial.
	\end{corollary}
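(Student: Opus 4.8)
The plan is to obtain Corollary~\ref{maincas1} as a direct application of Theorem~\ref{maintheorem}, so the first task is to verify that tournaments fall within the theorem's hypotheses, i.e.\ that every tournament is \emph{flag-free}. This is immediate from the definition of a flag: in each of the two flags the pair $\{u,w\}$ carries both arcs $(u,w)$ and $(w,u)$, whereas in a tournament there is exactly one arc between any two distinct vertices; equivalently, the pair $\{v,w\}$ of a flag carries no arc, which is again impossible in a (complete) tournament. Hence no tournament can contain a flag as an induced subdigraph, and Theorem~\ref{maintheorem} is applicable to any pair of tournaments on a common vertex set.

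Next I would translate the hypothesis ``same $3$-cycles'' into the per-triple hypothesis of the theorem. If $G$ and $H$ are tournaments on a common vertex set $V$ and $W$ is a $3$-subset, then $G[W]$ and $H[W]$ are themselves tournaments on three vertices, and up to isomorphism there are exactly two of these: the transitive triple and the directed $3$-cycle. Saying that $G$ and $H$ have the same $3$-cycles means precisely that $W$ induces a directed $3$-cycle in $G$ if and only if it does so in $H$; since the only alternative is the transitive triple, it follows that $G[W]$ and $H[W]$ are isomorphic for every $W$. The idiosyncratic polynomial is an isomorphism invariant, because relabelling the vertices amounts to a simultaneous permutation of the rows and columns of $A(y,z)$, which fixes $I$ and $J$ and conjugates $A$ and $A^{T}$, and therefore preserves the characteristic polynomial. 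Consequently $G[W]$ and $H[W]$ have the same idiosyncratic polynomial for every $3$-subset $W$, and for $|V|\ge 5$ Theorem~\ref{maintheorem} yields at once that $G$ and $H$ have the same idiosyncratic polynomial.

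The only genuine obstacle is the two cases $|V|\in\{3,4\}$, which lie below the threshold of Theorem~\ref{maintheorem} and must be handled separately, but this is a finite verification. For $|V|=3$ there is nothing to prove, since matching $3$-cycles forces $G$ and $H$ to be of the same isomorphism type. For $|V|=4$ it is convenient to exploit the tournament identity $J-A-I=A^{T}$, which gives $A(y,z)=A+(y+z)A^{T}$, so that the idiosyncratic polynomial of a tournament is $\det\!\big(xI-A-tA^{T}\big)$ with $t:=y+z$. Expanding this as a signed sum of principal minors, every minor of order at most $3$ is fixed by the corresponding induced subtournament (hence by the $3$-cycle data), so the full determinant is the only coefficient needing attention; inspecting the finitely many $4$-vertex tournaments then settles it. Here the one delicate point is that two non-isomorphic $4$-vertex tournaments can share the same set of $3$-cycles (the score types $(1,1,1,3)$ and $(0,2,2,2)$), but such a pair are converses of one another, and since $\det(xI-A-tA^{T})=\det(xI-A^{T}-tA)$ by transposition, converse tournaments always have equal idiosyncratic polynomials, which disposes of this last case.
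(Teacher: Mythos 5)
Your proof is correct and, for $|V|\ge 5$, follows exactly the route the paper intends: tournaments are flag-free, ``same $3$-cycles'' forces the induced subdigraphs on every triple to be isomorphic (both cyclic or both transitive), hence to share the same idiosyncratic polynomial, and Theorem~\ref{maintheorem} applies. The paper gives no written proof of the corollary and in particular never addresses the cases $|V|\in\{3,4\}$ that fall below the theorem's size threshold, so your separate treatment of these is a genuine addition rather than a redundancy. That said, your $4$-vertex analysis (expansion into principal minors plus the $(1,1,1,3)$ versus $(0,2,2,2)$ converse pair) can be replaced by a uniform argument already available in the paper: any two tournaments on the same vertex set are automatically $2$-hemimorphic (a $2$-subset induces a single arc either way), having the same $3$-cycles makes them $3$-hemimorphic, and Proposition~\ref{23key} --- which carries no restriction on the number of vertices --- then yields equality of the idiosyncratic polynomials for every $n$. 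This bypasses the size-$5$ hypothesis entirely; that hypothesis is needed in Theorem~\ref{maintheorem} only to deduce $2$-hemimorphy from $3$-hemimorphy, and for tournaments that deduction is free.
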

	
	Posets form an important class of digraphs for which the reconstruction
	problem is still open. Ille and Rampon \cite{ille1998reconstruction} proved
	that a poset is reconstructible by its deck together with its comparability graph.
	
	Following Habib
	\cite{habib1984comparability}, a parameter of a poset is said to be
	\emph{comparability invariant} if all posets with a given comparability graph
	have the same value of that parameter. The dimension and the number of
	transitive extension of a poset are two examples of comparability invariants.
	
	The next corollary is another consequence of Theorem \ref{maintheorem}.
	\begin{corollary}
	\label{maincas2} All the transitive orientations of a comparability graph have the same 
	idiosyncratic polynomial.
	\end{corollary}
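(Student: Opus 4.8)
The plan is to deduce the corollary directly from Theorem \ref{maintheorem}, by showing that any two transitive orientations $G$ and $H$ of a fixed comparability graph $\Gamma$ on a vertex set $V$ satisfy its hypotheses. First I would record two elementary invariance facts about the idiosyncratic polynomial that the introduction already anticipates: it is invariant under digraph isomorphism (being a characteristic polynomial, hence unchanged under conjugation by a permutation matrix), and it is invariant under passing to the converse. The latter follows from a one-line determinant computation: if $G$ has adjacency matrix $A$, its converse has adjacency matrix $A^{T}$, and transposing the matrix $tI - A^{T} - y(J - A^{T} - I) - zA$ inside the determinant turns it into $tI - A - y(J - A - I) - zA^{T}$, so the two characteristic polynomials coincide.

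Next I would check that transitive orientations are flag-free. A transitive orientation of $\Gamma$ is an orientation, so it contains no pair of opposite arcs (no digon); since each of the two flags contains the digon $\{(u,w),(w,u)\}$, no flag can occur as an induced subdigraph. Both $G$ and $H$ are therefore flag-free on the same vertex set $V$.

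The core step is the local condition. For a $3$-subset $W$ of $V$, the induced subdigraph $G[W]$ of a transitive orientation is again transitive, hence a transitive orientation of the induced comparability graph $\Gamma[W]$; the same holds for $H[W]$, and $\Gamma[W]$ is the same graph for both. So it suffices to prove that all transitive orientations of a comparability graph on three vertices have the same idiosyncratic polynomial. I would verify this by running through the four possible graphs $\Gamma[W]$: the empty graph and the single edge each admit, up to isomorphism, only one transitive orientation; the triangle forces a transitive tournament, which is its unique transitive orientation up to isomorphism; and the path $P_{3}$ has exactly two transitive orientations, the in-star and the out-star, which are converses of one another. Isomorphism invariance settles the first three cases and converse invariance settles the path, so $G[W]$ and $H[W]$ always share their idiosyncratic polynomial.

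Finally, when $|V| \ge 5$, Theorem \ref{maintheorem} applies and yields that $G$ and $H$ have the same idiosyncratic polynomial. The main obstacle is precisely that the theorem requires at least five vertices, so comparability graphs on at most four vertices fall outside its scope and must be disposed of separately; since there are only finitely many such graphs and finitely many transitive orientations of each, this is a direct (if slightly tedious) verification, again using converse invariance to cut down the cases. Naively padding a small graph with isolated vertices does not help here, because adding an isolated vertex produces a border of $y$'s in the generalized adjacency matrix, and the resulting polynomial is not a function of the original polynomial alone.
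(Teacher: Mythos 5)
Your proof is correct and follows the route the paper intends: transitive orientations are flag-free (as orientations they contain no digon, hence no flag), their restrictions to any $3$-subset are transitive orientations of the same three-vertex graph and therefore hemimorphic (isomorphic or converse to one another), so they share their idiosyncratic polynomial and Theorem \ref{maintheorem} applies. Your concern about the $\left\vert V\right\vert \geq5$ hypothesis is legitimate (the paper glosses over it), but instead of an exhaustive check for at most four vertices you can note that two transitive orientations of the same comparability graph are automatically $\{2,3\}$-hemimorphic and invoke Proposition \ref{23key} directly, which carries no restriction on the number of vertices.
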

	
	\section{Preliminaries}
	
	A \emph{graph} $G$ consists of a finite set $V$ of \emph{vertices} together
	with a set $E$ of unordered pairs of distinct vertices of $V$ called
	\emph{edges}. Let $G=(V,E)$ be a graph. With respect to an ordering
	$v_{1},\ldots,v_{n}$\ of $V$, the \emph{adjacency matrix} of $G$ is the
	$n\times n$ zero diagonal matrix $A=[a_{ij}]$\ in which $a_{ij}=1$ if $\left(
	v_{i},v_{j}\right)  \in E$ and $0$ otherwise. The \emph{complement }of a graph
	$G=(V,E)$ is the graph $\overline{G}$ with the same vertices as $G$ and such
	that, for any $u,v\in V$, $\left\{  u,v\right\}  $ is an edge of $\overline
	{G}$ if and only if $\left\{  u,v\right\}  \notin E$.
	
	A \emph{directed graph} or \emph{digraph} $G$ is a pair $(V,E)$ where $V$ is a
	nonempty set $V$ of \emph{vertices} \ and $E$ is a set of ordered pairs of
	distinct vertices called \emph{arcs}. Let $W$ be a subset of $V$ the
	\emph{subdigraph} of $G$ \emph{induced} by $W$ is the digraph $G\left[
	W\right]  $ whose vertex set is $W$ and whose arc set consists of all arcs of
	$G$ which have end-vertices in $W$. 
	A digraph $G=(V,E)$ is \emph{symmetric} if, whenever
	$(u,v) \in E$ then $(v,u)$ $\in E$. There is a natural one
	to one correspondence between graphs and symmetric digraphs.
	
	Let $G=(V,E)$ be a digraph. With respect to an ordering $v_{1},\ldots,v_{n}%
	$\ of $V$, the \emph{adjacency matrix} of $G$ is the $n\times n$ zero diagonal
	matrix $A=[a_{ij}]$ \ in which $a_{ij}=1$ if $\left(  v_{i},v_{j}\right)  \in
	E$ and $0$ otherwise. The \emph{converse }of $G$, denoted by $G^{\ast}$, is
	the digraph obtained from $G$ by reversing the direction of all its arcs. The
	adjacency matrix of $G^{\ast}$ is the transpose $A^{T}$ of the matrix $A$, in
	particular $P_{G}\left(  X\right)  =P_{G^{\ast}}\left(  X\right)  $. The
	\emph{complement }of $G$ is the digraph $\overline{G}$ with vertex set $V$ and
	such that, for any $u,v\in V$, $\left(  u,v\right)  $ is an arc of
	$\overline{G}$ if and only if $\left(  u,v\right)  \notin E$. The adjacency
	matrix of $\overline{G}$ is $J-A-I$.
	
	 An
	\emph{oriented} \emph{graph} is a digraph $G=\left(  V,E\right)  $ such that
	for $x,y\in V$ , if $\left(  x,y\right)  \in$ $E$, then $\left(  y,x\right)
	\notin$ $E$. Let $G$ be a graph. An \emph{orientation} of $G$ is an assignment of a direction to 
	each edge of $G$ so that we obtain an oriented graph.
	 A \emph{tournament} is an orientation of the complete graph. An oriented graph
	  is a \emph{poset} if, whenever $\left(  x,y\right)$ and 
	  $\left(  y,z\right)$ are arcs then $\left(  x,z\right)$ is also an arc.
	  A \emph{transitive orientation} of a graph is one where the resulting oriented graph is
	  a poset. \emph{Comparability graphs} are the class of graphs that have a transitive orientation.

		%%%%%%%%%%%%%%%%%%%%%%%%%%%%%%%%%%%%%%%%%%%%%%%%%%%%%%%%%%%%%%%%%%%%%%%%%%%%%%%%%%%%%%%%%%%%%%%%%
	\section{Determinant of Stockmeyer's tournaments}\label{Stockmeyer}
	
	In this section, we prove Proposition \ref{parity}. For this, we will use the following lemma, which 
	is a particular case of \cite[Equality~(A)]{pouzet1979note}.
    \begin{lemma}
    	For a pair $\left(  G,H\right)  $ of digraphs, satisfying the hypothesis
    	 of the reconstruction Conjecture, we have
    	\begin{equation}\label{a}
    	\det(G)-\det(H)=\left(  -1\right)  ^{n+1}\left[  C(G)-C(H)\right] %
    	\end{equation}
    	where $C(G)$ and $C(H)$ are respectively the numbers of Hamiltonian cycles of
    	$G$ and $H$.
    \end{lemma}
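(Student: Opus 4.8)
The plan is to expand each determinant by the coefficient theorem for digraphs, isolate the single-cycle (Hamiltonian) term, and then argue that everything that remains is forced to agree on $G$ and $H$ by the fact that they satisfy the reconstruction hypothesis (i.e.\ are hypomorphic).

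First I would write $\det(G)=\sum_{\sigma}\operatorname{sgn}(\sigma)\prod_{i}(A_{G})_{i\sigma(i)}$. Since $A_{G}$ has zero diagonal, only fixed-point-free $\sigma$ survive, and these are precisely the cycle covers of $G$ (spanning families of vertex-disjoint directed cycles); a permutation with $c$ cycles on $n$ points has sign $(-1)^{n-c}$. Grouping the cycle covers by the partition $\pi$ of $V$ into the supports of their cycles, and using that the number of covers realizing $\pi$ is $\prod_{B\in\pi}C(G[B])$, I obtain
\[
\det(G)=\sum_{\pi}(-1)^{\,n-|\pi|}\prod_{B\in\pi}C(G[B]).
\]
The one-block partition contributes $(-1)^{n-1}C(G)=(-1)^{n+1}C(G)$, the term named in the statement; every other $\pi$ has $|\pi|\ge 2$, so each block $B$ is a proper subset of $V$ and each factor $C(G[B])$ is an invariant of a proper induced subdigraph.

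It then remains to prove that the multi-block remainder $R(G):=\sum_{|\pi|\ge2}(-1)^{\,n-|\pi|}\prod_{B}C(G[B])$ satisfies $R(G)=R(H)$. Because $G$ and $H$ are hypomorphic, Kelly's Lemma makes any quantity obtained by summing a fixed invariant over proper induced subdigraphs common to both; the hard part is that $R(G)$ is not of this form. Peeling off the cycle through a fixed vertex $v^{\ast}$ rewrites it as a convolution over complementary proper subsets,
\[
R(G)=\sum_{\emptyset\ne W\subseteq V\setminus\{v^{\ast}\}}(-1)^{\,n-1+|W|}\,C\bigl(G[V\setminus W]\bigr)\,\det\bigl(G[W]\bigr),
\]
and such couplings of complementary pieces are exactly the spanning quantities that plain vertex-deletion counting fails to recover. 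This is the main obstacle, and it is what Pouzet's Equality~(A) settles: Möbius inversion over the partition lattice re-expresses the coupling so that the hypomorphism bijection $\beta\colon V(G)\to V(H)$ can be applied block by block, while the alternating signs cancel every contribution that genuinely uses all of $V$, yielding $R(G)=R(H)$.

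Granting $R(G)=R(H)$, subtracting the two expansions leaves only the Hamiltonian terms, and hence $\det(G)-\det(H)=(-1)^{n+1}\bigl(C(G)-C(H)\bigr)$, as asserted.
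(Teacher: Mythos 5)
Your expansion of the determinant as a signed sum over cycle covers, grouped by the partition of $V$ into the cycle supports, is correct and matches the Coates-formula viewpoint the paper itself uses in Section 4; isolating the one-block (Hamiltonian) term is the right first move. But be aware that the paper does not prove this lemma at all: it quotes it as a special case of Pouzet's Equality (A). So the entire mathematical content you must supply is exactly the step you leave unproved, namely $R(G)=R(H)$, and your treatment of that step is circular: you say the coupling of complementary pieces ``is what Pouzet's Equality (A) settles,'' but Equality (A) \emph{is} the identity being proved. The surrounding language --- M\"obius inversion over the partition lattice letting ``the hypomorphism bijection $\beta\colon V(G)\to V(H)$ be applied block by block'' --- does not describe an actual argument: hypomorphy supplies, for each vertex $v$, an isomorphism $G-v\cong H-\sigma(v)$ for some bijection $\sigma$; it does not supply a single global bijection that can be applied to the blocks of an arbitrary partition of $V$, and no cancellation of the contributions ``that genuinely use all of $V$'' is actually exhibited.

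The missing idea is the standard Kelly-type lemma for \emph{disconnected spanning} subdigraphs: for every linear subdigraph type $F$ on $n$ vertices with at least two cycles, the number $s(F,G)$ of copies of $F$ in $G$ equals $s(F,H)$. One proves this by splitting $F$ into two nonempty unions of cycles $F_{1}$ and $F_{2}$ and counting ordered pairs (copy of $F_{1}$, copy of $F_{2}$): the total number of such pairs is $s(F_{1},G)\,s(F_{2},G)$, reconstructible by Kelly's lemma since $|V(F_{i})|<n$, while the pairs that are \emph{not} vertex-disjoint span fewer than $n$ vertices and are therefore also reconstructible by averaging over vertex-deleted subdigraphs; the difference counts the disjoint pairs and hence $s(F,G)$ up to a multiplicative constant depending only on $F$. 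Summing $(-1)^{n-|\pi|}$ times these counts over all multi-cycle types then yields $R(G)=R(H)$ and completes the proof. Without this (or an equivalent) argument, your proposal restates the problem rather than solving it.
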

	Remark that in this Lemma, Equality (\ref{a}) is slightly different 
	from Equality (A) of \cite{pouzet1979note}. The reason is that, we do not use the same definition of cycle. In our paper, we mean by a (directed)
	\emph{cycle} of a digraph $G$ every subdigraph with vertex set $\left\{
	x_{1},\ldots,x_{t}\right\}  $ and arcs set $\left\{  x_{1}x_{2},\ldots
	,x_{t-1}x_{t},x_{t}x_{1}\right\}  $. This cycle is said to be
	\emph{Hamiltonian} if it goes through each vertex of $G$ exactly once. A path
	 of a digraph $G$ is a subdigraph with vertex set $\left\{  x_{1},\ldots
	,x_{t}\right\}  $ and arc set $\left\{  x_{1}x_{2},\ldots,x_{t-1}%
	x_{t}\right\}  $. Such path is denote by $x_{1}x_{2}\ldots x_{t}$. The notion of Hamiltonian path is defined similarly.
	
	Let $T$ be a tournament and let $v$ be a vertex of $T$. We denote by $N^{+}(v)$
	(resp.$N^{-}(v)$) the \emph{out-neighborhood} (resp. the\emph{
		in-neighborhood}) that is the set of vertices dominated by $v$ (resp. that
	dominate $v$).
	
	\begin{remark}
		\label{path}There is the natural one-to-one correspondence between Hamiltonian
		cycles of $T$ and Hamiltonian paths of $T-v$ from a vertex $x\in N^{+}(v)$
		to a vertex of $N^{-}(v)$.
	\end{remark}

	\begin{pop1}
		 Let $\mathcal{O}=\left\{  1,3,\ldots
		,2^{n}-1\right\}  $ and $\mathcal{E}=\left\{  2,4,\ldots,2^{n}\right\}  $. The
		set $\mathcal{P}$ of Hamiltonian paths of $A_{n}$ is partitioned into four subsets:
		
		\begin{description}
			\item[i)] $\mathcal{P}_{o,o}$ the set of Hamiltonian paths joining two vertices in
			$\mathcal{O}$.
			
			\item[ii)] $\mathcal{P}_{e,e}$ the set of Hamiltonian paths joining two vertices in
			$\mathcal{E}$.
			
			\item[iii)] $\mathcal{P}_{o,e}$ the set of Hamiltonian paths joining a vertex in
			$\mathcal{O}$ to a vertex in $\mathcal{E}$.
			
			\item[iv)] $\mathcal{P}_{e,o}$ the set of Hamiltonian paths joining a vertex in
			$\mathcal{E}$ to a vertex in $\mathcal{O}$.
			
			 %% By Remark \ref{path}, $C(B_{n})=\left\vert \mathcal{P}_{e,o}\right\vert
			%% \mathcal{\ }$and $C(C_{n})=\left\vert \mathcal{P}_{o,e}\right\vert $.
			 We will prove that $\left\vert \mathcal{P}_{o,o}\right\vert =\left\vert\mathcal{P}_{e,e}\right\vert $. 
			Let $x_{1}x_{2}\ldots x_{2^{n}}%
			\in\mathcal{P}_{o,o}$. For $i=1,\ldots,2^{n}$, we set $\widetilde{x}
			_{i}:=2^{n}-x_{2^{n}-i+1}+1$. 
			%%This defines a permutation of the vertices of
			%%$A_{n}$. Let $i\in\left\{  1,\ldots,2^{n}-1\right\}  $,
		It is easy to see that
		\end{description}	
		\[
		odd(\widetilde{x}_{i+1}-\widetilde{x}_{i})=odd(x_{(2^{n}-i)+1}-x_{2^{n}-i})=1
		\]
		 Moreover, $\widetilde{x}_{1}$, $\widetilde{x}_{2^{n}}\in$
		 $\mathcal{E}$, then $\widetilde{x}_{1}\widetilde{x}_{2}\ldots\widetilde
		 {x}_{2^{n}}\in\mathcal{P}_{e,e}$. It follows that there is a one-to-one
		correspondence between $\mathcal{P}_{o,o}$ and $\mathcal{P}_{e,e}$. To
		conclude it suffices to apply Equality (\ref{a}) and Redei's theorem \cite{redei1934kombinatorischer} asserting that the number of Hamiltonian paths in a
		tournament is always odd. 
	\end{pop1}

     %%%%%%%%%%%%%%%%%%%%%%%%%%%%%%%%%%%%%%%%%%%%%%%%%%%%%%%%%%%%%%%%%%%%%
		\section{Counterexample for Question \ref{invariant}}
	
Consider the digraph $G$  with vertex set $\left\{  1,\ldots,n\right\}  $ and whose arcs are
	 $  (1,2),\left(  n-1,n\right) $, $\left(  i,i+1\right)$ and $\left(
	1,i+1\right)$  for $i=1,\ldots,n-2 $. Let $G^{\prime}$ be the digraph
	obtained from $G$ by reversing the arc $\left(  n-1,n\right) $. These two
	 digraphs are drawn in Figure $1$.
	
	 We will prove the following proposition.
	 
	 \begin{proposition}\label{counterq2}
	 	Let $G$ and $G^{\prime}$ be the digraphs defined above. Then we have
	 \begin{description}
	 	\item[i)] $G$ and $G^{\prime}$ do not have the same idiosyncratic polynomial;
	 	\item[ii)] $G\left[  W\right]  $ and $G^{\prime}\left[  W\right]  $
	 	have the same  idiosyncratic polynomial for every proper subset $W$ of
	 	$\left\{  1,\ldots,n\right\}$.
	 \end{description}	
	 \end{proposition}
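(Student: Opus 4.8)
The starting point is to record how reversing a single arc acts on the generalized adjacency matrix. Write $\chi_{H}(X,y,z)=\det\!\bigl(XI-A_{H}(y,z)\bigr)$ for the idiosyncratic polynomial of a digraph $H$, and let $A'(y,z)$ be the generalized adjacency matrix of $G'$. Passing from $G$ to $G'$ changes the $(n-1,n)$ entry of $A(y,z)$ from $1$ to $y+z$ and the $(n,n-1)$ entry from $y+z$ to $1$, and nothing else; writing $E_{p,q}$ for the matrix whose only nonzero entry is a $1$ in position $(p,q)$, this is
\[
A'(y,z)=A(y,z)+(1-y-z)\,(E_{n,n-1}-E_{n-1,n}).
\]
The same identity holds after restricting to any $S\subseteq\{1,\ldots,n\}$, the two restrictions being equal as soon as $\{n-1,n\}\not\subseteq S$. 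I would also record the converse-invariance of the idiosyncratic polynomial: since $(J-A-I)^{T}=J-A^{T}-I$ one has $A_{H}(y,z)^{T}=A_{H^{*}}(y,z)$, and therefore $\chi_{H}=\chi_{H^{*}}$ for every digraph $H$.

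These two facts reduce the statement to a comparison of principal minors. Let $A(y,z)[S]$ denote the principal submatrix on $S$. Expanding the determinant multilinearly in the two altered entries gives, for every $S$,
\[
\det A(y,z)[S]-\det A'(y,z)[S]=(1-y-z)\bigl(C^{S}_{n-1,n}-C^{S}_{n,n-1}\bigr),
\]
where $C^{S}_{p,q}$ is the $(p,q)$-cofactor of $A(y,z)[S]$ and the right-hand side vanishes unless $\{n-1,n\}\subseteq S$. Because the coefficients of $\chi_{G[W]}$ are, up to sign, the sums $e_{j}=\sum_{|S|=j}\det A(y,z)[S]$ of principal $j\times j$ minors, once one knows that $\det A(y,z)[S]=\det A'(y,z)[S]$ for every proper $S\subsetneq\{1,\ldots,n\}$, assertion (ii) is immediate: for a proper $W$ every $S\subseteq W$ is proper, so all coefficients of $\chi_{G[W]}$ and $\chi_{G'[W]}$ agree. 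Assertion (i) then collapses to the single top term, namely $\chi_{G}-\chi_{G'}=(-1)^{n}\bigl(\det A(y,z)-\det A'(y,z)\bigr)$.

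To obtain the equality of minors on proper subsets I would show the stronger fact that for every proper $S$ the induced subdigraphs $G[S]$ and $G'[S]$ are \emph{hemimorphic}, i.e.\ $G[S]\cong G'[S]$ or $G[S]\cong G'[S]^{*}$; by isomorphism invariance together with $\chi_{H}=\chi_{H^{*}}$ this forces $\chi_{G[S]}=\chi_{G'[S]}$, hence in particular the needed equality of determinants. Concretely, when the deleted vertex is the one responsible for the asymmetry between $n-1$ and $n$, the pair $\{n-1,n\}$ becomes a \emph{module} of $G[S]$, so the transposition $(n-1\ n)$ is an isomorphism $G[S]\to G'[S]$; in the remaining cases the local configuration around $\{n-1,n\}$ is one of the two flags, and the two flags are converses of one another, which supplies the required converse-isomorphism. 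This is exactly where the failure of flag-freeness is exploited.

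Finally, for (i) it remains to check that $\det A(y,z)-\det A'(y,z)=(1-y-z)\bigl(C_{n-1,n}-C_{n,n-1}\bigr)$ is not the zero polynomial. Here I would invoke the Coates (linear-subdigraph) reading of a cofactor: $C_{n-1,n}$ and $C_{n,n-1}$ are the signed, $(y,z)$-weighted generating functions of the families consisting of vertex-disjoint cycles together with a single open path joining $n-1$ and $n$, the two cofactors corresponding to the two orientations of that path; the cancellation used on proper subsets degenerates when all $n$ vertices are present, and one isolates an explicit surviving monomial. The main obstacle is the structural step of the previous paragraph — verifying, simultaneously for every deleted vertex, that the single reversed arc is absorbed either by the module symmetry of $\{n-1,n\}$ or by the flag/converse symmetry, and then confirming that precisely this absorption breaks down for $S=\{1,\ldots,n\}$. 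The rest is the bookkeeping of the perturbation identity and of the coefficientwise comparison.
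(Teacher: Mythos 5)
Your reduction to the principal minors of the generalized adjacency matrix is sound, as is the identity $A_{H}(y,z)^{T}=A_{H^{*}}(y,z)$ and the observation that the cross term in the two-entry perturbation cancels because the product of the swapped entries is unchanged. The genuine gap is the structural claim on which assertion (ii) rests: it is \emph{not} true that $G[S]$ and $G'[S]$ are hemimorphic for every proper $S$. The hard case is when $\{1,2,n-1,n\}\subseteq W$ but some interior vertex $k$ is missing. Then $W$ splits into two parts $W_{1}$ and $W_{2}$ with no arcs between them; $G'[W]$ agrees with $G[W]$ on $W_{1}$ and is the \emph{converse} of $G[W]$ on $W_{2}$. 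A global isomorphism $G[W]\to G'[W]$ would have to carry $G[W_{2}]$ onto its converse, and a global converse-isomorphism would have to carry $G[W_{1}]$ onto its converse; for most choices of $W$ neither piece is self-converse (each contains a flag, and the two flags are converse but not isomorphic — compare out-degree sequences), so $G[W]$ and $G'[W]$ are not hemimorphic. Your dichotomy covers only the deletion of the vertex adjacent to $\{n-1,n\}$ (where that pair does become a module), and in the remaining cases the fact that the two flags are converses of one another gives a converse-isomorphism of the three-vertex configuration only; the identity map cannot be a converse-isomorphism on one component and an isomorphism on the other simultaneously. Indeed, the whole point of this counterexample is that the proper induced subdigraphs have equal idiosyncratic polynomials \emph{without} being hemimorphic.

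The paper closes exactly this case by a matrix identity rather than a combinatorial one: Proposition \ref{HLowey} (Hartfiel) states that if the off-diagonal blocks of a partitioned matrix have the special rank-one form $\alpha\beta^{t}$ and $\beta\gamma^{t}$, then transposing one diagonal block does not change the characteristic polynomial. Since deleting $k$ leaves no arcs between $W_{1}$ and $W_{2}$, the off-diagonal blocks of the generalized adjacency matrix are the all-$y$ rank-one blocks, and $G'[W]$ is obtained from $G[W]$ precisely by transposing one diagonal block; equality of the idiosyncratic polynomials follows with no hemimorphy. You would need to import this lemma (or prove an equivalent cancellation of $C^{S}_{n-1,n}-C^{S}_{n,n-1}$ directly) to repair your argument. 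For part (i) your plan is compatible in spirit with the paper's — the paper passes to the complement, borders the matrix by an extra all-ones row and column, and applies the Coates linear-subdigraph formula to get $\det(\overline{A})-\det(\overline{A'})=-1$ — but as written, ``one isolates an explicit surviving monomial'' is a promissory note, not a computation, so that step also remains to be supplied.
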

   Our proof is based on the Coates determinant formula \cite{coates1959flow}. This can be used to evaluate the determinant of
   the adjacency matrix of a digraph. Let $H$
   be a digraph on $n$ vertices (possibly with loops). A linear subdigraph $L$ of
   $H$ is a vertex disjoint union of some cycles in $H$ (we consider a loop as a
   cycle of length $1$). The set of linear subdigraphs of $H$ with $n$ vertices
   is denote by $\mathcal{L}(H)$. If $N$ is the adjacency matrix of $H$,
   then from Coates determinant formula we have
   \begin{equation}
   \det\left(  N\right)  =\left(  -1\right)  ^{n}\underset{L\in\mathcal{L}%
   	(H)}{\sum^{n}}\left(  -1\right)  ^{\left\vert L\right\vert } \label{b}%
   \end{equation}
   where $\left\vert L\right\vert $ is the number of cycles in $L$.

    \begin{pop}
    	To prove that the digraphs $G$ and $G^{\prime}$ do not have the same idiosyncratic polynomial, it suffices 
    	to check that their complement $\overline{G}$ and
	$\overline{G^{\prime}}$ do not have the same determinant. Let $A$ and
	$A^{\prime}$ be the adjacency matrices of $G$ and $G^{\prime}$ respectively. Then, the adjacency
	matrices of $\overline{G}$ and $\overline{G^{\prime}}$ are respectively
	$\overline{A}=J-A-I$ and $\overline{A^{\prime}}=J-A^{\prime}-I$.
	
	Let \[
	\widetilde{A}:=\left(
	\begin{tabular}
	[c]{c|c}%
	$A+I$ & $\mathbbm{1}$\\\hline
	$\mathbbm{1}^{t}$ & $1$%
	\end{tabular}
	\right)\mbox{, }\widetilde{A'}:=\left(
	\begin{tabular}
	[c]{c|c}%
	$A^{\prime}+I$ & $\mathbbm{1}$\\\hline
	$\mathbbm{1}^{t}$ & $1$%
	\end{tabular}
	\right)
	\]
	where $\mathbbm{1}$ the all one column vector of dimension $n$.
	It is easy to
	see that%
	\[
	\det(\overline{A})=\left(  -1\right)  ^{n}\det
	(
	\widetilde{A}
	)\mbox{, }\det(\overline{A^{\prime}})=\left(  -1\right)
	^{n}\det(\widetilde{A'})
	\]
	
	Remark that
	 $\widetilde{A}$ and $\widetilde{A'}$ can be viewed as the
	adjacency matrices of the digraphs $\widetilde{G}$ and $\widetilde{G^{\prime}}$
	defined on the set $\left\{
	1,\ldots,n+1\right\}  $ as follow.
	The arcs of $\widetilde{G}$ are  $(1,2)$, $\left(
	n-1,n\right)$, $\left(  i,i+1\right)$, $\left(  1,i+1\right)$ for $i=1,\ldots,n-2$
	and $(i,i)$, $(i,n+1)$, $(n+1,i)$ for $i=1,\ldots
	,n$. The digraph $\widetilde{G^{\prime}}$ is obtained from
	$\widetilde{G}$ by reversing the arc $\left(  n-1,n\right) $.

	We will evaluate $\det(\widetilde{A})-\det(\widetilde{A'})$ by using the
	formula (\ref{b}).
	For this,
	we partition
	$\mathcal{L}(\widetilde{G})$ into four subsets:
	
	\begin{itemize}
		\item $\mathcal{L}_{1}(\widetilde{G})$ the set of linear subdigraphs containing
		the arcs $(1,2)$ and $\left(  n-1,n\right)$.
		
		\item $\mathcal{L}_{2}(\widetilde{G})$ the set of linear subdigraphs containing
		the arc $(1,2)$ but not $\left(  n-1,n\right)$.
		
		\item $\mathcal{L}_{3}(\widetilde{G})$ the set of linear subdigraphs containing
		the arc $\left(  n-1,n\right)  $ but not $(1,2)$.
		
		\item $\mathcal{L}_{4}(\widetilde{G})$ the set of linear subdigraph containing
		neither the arc $\left(  n-1,n\right)  $ nor $(1,2)$.
	\end{itemize}
	
	We define a similar partition of $\mathcal{L}(\widetilde{G^{\prime}})$ by
	replacing the arc $\left(  n-1,n\right)  $ by $\left(  n-1,n\right) $.
	Clearly we have $\mathcal{L}_{2}(\widetilde{G})=\mathcal{L}_{2}(\widetilde
	{G^{\prime}})$, $\mathcal{L}_{4}(\widetilde{G})=\mathcal{L}_{4}(\widetilde
	{G^{\prime}})$ and $\mathcal{L}_{3}(\widetilde{G^{\prime}})=\left\{  L^{\ast
	}:L\in\mathcal{L}_{3}(\widetilde{G})\right\}  $. Moreover, $\mathcal{L}_{1}(\widetilde
{G^{\prime}})$ is empty and $\mathcal{L}%
	_{1}(\widetilde{G})$ contains only the Hamiltonian cycle whose arcs are
	$(i,i+1)$ for $i=1,\ldots,n$ and $(n+1,1)$.
	
	Using formula (\ref{b}), we get $\det(\widetilde{A})-\det(\widetilde{A^{\prime}%
	})=\left(  -1\right)  ^{n+1}$. Hence $\det(\overline{A})-\det(\overline
	{A^{\prime}})=-1$. It follows that $G$ and $G^{\prime}$ do not have the same
	 idiosyncratic polynomial.
	
	We will prove now that $G\left[  W\right]  $ and $G^{\prime}\left[  W\right]  $
	have the same  idiosyncratic polynomial for every proper subset $W$ of
	$\left\{  1,\ldots,n\right\}  $. This is true when $\left\{
	1,2,n,n-1\right\}  $ is not entirely contained in $W$, because $G\left[
	W\right]  =G^{\prime}\left[  W\right]  $ or $G^{\ast}\left[  W\right]
	=G^{\prime}\left[  W\right]  $. So we can assume that $\left\{  1,2,n,n-1\right\}
	\subseteq W$. Let $k\in\left\{  3,\ldots,n-2\right\}\setminus W$. The set
	$W$ is partitioned into two nonempty subsets $W_{1}:=W\cap\left\{
	1,\ldots,k-1\right\}  $ and $W_{2}:=W\cap\left\{  k+1,\ldots,n\right\}  $.
	Clearly, there is no arc between $W_{1}$ and $W_{2}$ in $G\left[  W\right]  $
	and $G^{\prime}\left[  W\right]  $. Moreover $G\left[  W_{1}\right]
	=G^{\prime}\left[  W_{1}\right]  $ and $G^{\ast}\left[  W_{2}\right]
	=G^{\prime}\left[  W_{2}\right]  $. Then the generalized adjacency matrices of
	$G\left[  W\right]  $ and $G^{\prime}\left[  W\right]  $ have the form%
	
	\[
	A=\left(
	\begin{array}
	[c]{cc}%
	A_{11} & \alpha\beta^{t}\\
	\beta\alpha^{t} & A_{22}%
	\end{array}
	\right)  \text{ \ \ \ and \ \ \ }B=\left(
	\begin{array}
	[c]{cc}%
	A_{11} & \alpha\beta^{t}\\
	\beta\alpha^{t} & A_{22}^{t}%
	\end{array}
	\right)  \text{,}%
	\]

	where $\alpha=\left(
	\begin{array}
	[c]{c}%
	1\\
	\vdots\\
	1
	\end{array}
	\right)  $ and $\beta=\left(
	\begin{array}
	[c]{c}%
	y\\
	\vdots\\
	y
	\end{array}
	\right)  $.
	%%%%%\begin{proposition}
	%\label{PropLoewy}Under the assumptions of Lemma above exept that $A_{11}$ may
	%be singular, the matrices $A$ and $B$ have the same characteristic polynomial.
   %\end{proposition}
	We conclude by Proposition \ref{HLowey} below.
  \end{pop}

     \begin{proposition}
     	\label{HLowey}Suppose $k$, $n$ are positive integers such that $k<n$. Suppose
     	that $\alpha$, $\gamma\in F^{k}$, $\beta\in F^{n-k}$, $A_{11}\in F^{k,k}$,
     	$A_{22}\in F^{n-k,n-k}$. Let
     	
     	$A=\left(
     	\begin{array}
     	[c]{cc}%
     	A_{11} & \alpha\beta^{t}\\
     	\beta\gamma^{t} & A_{22}%
     	\end{array}
     	\right)  $, $B=\left(
     	\begin{array}
     	[c]{cc}%
     	A_{11} & \alpha\beta^{t}\\
     	\beta\gamma^{t} & A_{22}^{t}%
     	\end{array}
     	\right)  $.
     	
     	Then $A$ and $B$ have the same characteristic polynomial.
     \end{proposition}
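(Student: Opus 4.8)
The plan is to compute both characteristic polynomials via a Schur complement and to exploit the rank-one shape of the off-diagonal blocks. Write $X$ for the indeterminate of the characteristic polynomial and work over the field of rational functions $F(X)$, so that $XI_{k}-A_{11}$ is invertible there: its determinant is a monic polynomial of degree $k\geq 1$, hence a nonzero element of $F(X)$. Since the two characteristic polynomials both lie in $F[X]$, establishing their equality in the larger field $F(X)$ will suffice.

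First I would apply the Schur complement formula to
\[
XI_{n}-A=\left(
\begin{array}{cc}
XI_{k}-A_{11} & -\alpha\beta^{t}\\
-\beta\gamma^{t} & XI_{n-k}-A_{22}
\end{array}
\right),
\]
which gives
\[
\det(XI_{n}-A)=\det(XI_{k}-A_{11})\cdot\det\left(XI_{n-k}-A_{22}-\beta\gamma^{t}(XI_{k}-A_{11})^{-1}\alpha\beta^{t}\right).
\]
The key step is to notice that $\gamma^{t}(XI_{k}-A_{11})^{-1}\alpha$ is a scalar (a $1\times1$ block); call it $c=c(X)\in F(X)$. The correction term therefore collapses to $c\,\beta\beta^{t}$, and the Schur complement becomes $XI_{n-k}-A_{22}-c\,\beta\beta^{t}$.

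Carrying out the identical computation for $B$ — whose only difference from $A$ is that the block $A_{22}$ is replaced by $A_{22}^{t}$, the off-diagonal blocks being unchanged — produces the same prefactor $\det(XI_{k}-A_{11})$ and the same scalar $c$, so
\[
\det(XI_{n}-B)=\det(XI_{k}-A_{11})\cdot\det\left(XI_{n-k}-A_{22}^{t}-c\,\beta\beta^{t}\right).
\]
Now comes the decisive observation: because $\beta\beta^{t}$ is symmetric, the second factor for $B$ is exactly the transpose of the second factor for $A$, that is,
\[
XI_{n-k}-A_{22}^{t}-c\,\beta\beta^{t}=\left(XI_{n-k}-A_{22}-c\,\beta\beta^{t}\right)^{t}.
\]
Since a matrix and its transpose have equal determinants, the two Schur complements have the same determinant, whence $\det(XI_{n}-A)=\det(XI_{n}-B)$ in $F(X)$, and therefore in $F[X]$.

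The one point that genuinely needs care — and the only place where the argument could stumble — is the invertibility of $XI_{k}-A_{11}$ demanded by the Schur complement. This is precisely why I pass to the field of fractions $F(X)$ at the outset rather than arguing over $F$ directly (which would otherwise force me to excise the finitely many eigenvalues of $A_{11}$ and recover the identity by a polynomial-identity argument, and would be delicate if $F$ were finite). Everything else is a routine block-determinant manipulation, once the rank-one contraction $\gamma^{t}(XI_{k}-A_{11})^{-1}\alpha\mapsto\text{scalar}$ is spotted and the symmetry of $\beta\beta^{t}$ is used.
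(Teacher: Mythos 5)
Your proof is correct. The Schur-complement computation is legitimate once you pass to $F(X)$, where $XI_{k}-A_{11}$ is invertible because its determinant is monic of degree $k\geq1$ and hence a unit there; the collapse of $\beta\gamma^{t}(XI_{k}-A_{11})^{-1}\alpha\beta^{t}$ to $c\,\beta\beta^{t}$ with the scalar $c=\gamma^{t}(XI_{k}-A_{11})^{-1}\alpha$ is exactly where the rank-one shape of the off-diagonal blocks enters (every row of the $(1,2)$ block is proportional to $\beta^{t}$ and every column of the $(2,1)$ block is proportional to $\beta$, so the correction term is forced to be a multiple of the symmetric matrix $\beta\beta^{t}$), the transpose-invariance of the determinant finishes the identity, and equality of two polynomials inside $F(X)$ gives equality in $F[X]$. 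This is a genuinely different route from the paper, which offers no argument at all: the proposition is dispatched there in one line as a direct consequence of Lemma~5 of Hartfiel and Loewy. The citation buys brevity and an attribution to the literature; your computation buys a short, self-contained verification that makes visible precisely which structural features of $A$ and $B$ are responsible for the coincidence of their characteristic polynomials, and your closing remark about why one should work in the fraction field rather than over $F$ itself is the right point of care to flag.
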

	 This proposition is the direct consequence of \cite[Lemma~5]{hartfiel1984matrices}.

	 	\begin{figure}[h]
	 	\begin{center}
	 		\setlength{\unitlength}{0.7cm}
	 		\begin{picture}(30,9.1)
	 		
	 		\linethickness{3mm}

	 		\thicklines
	 		\qbezier (1,2)(2.5,2.25)(4,2)
	 		\qbezier (1,2)(2.5,1.7)(4,2)
	 		\put(1,4){\circle*{.2}}
	 		\put(1,2){\circle*{.2}}
	 		\put(4,2){\circle*{.2}}
	 		\put(0.8,4.2){$0$}
	 		\put(0.8,1.2){$1$}
	 		\put(3.8,1.2){$2$}
	 		\thicklines
	 		\put(2.4,2.125){\vector(1,0){0.26}}
	 		\put(2.6,1.85){\vector(-1,0){0.26}}
	 		\put(1,2){\line(0,1){2}}
	 		\put(1,4){\vector(0,-1){1.2}}
	 		\thicklines
	 		\qbezier (4,2)(5.5,2.25)(7,2)
	 		\qbezier (4,2)(5.5,1.7)(7,2)
	 		\put(7,2){\circle*{.2}}
	 		\put(10,2){\circle*{.2}}
	 		\put(6.8,1.2){$3$}
	 		\put(9.3,1.2){$n-3$}
	 		\thicklines
	 		\put(5.4,2.125){\vector(1,0){0.26}}
	 		\put(5.6,1.85){\vector(-1,0){0.26}}

	 		\put(7.5,2){\circle*{.1}}
	 		\put(8,2){\circle*{.1}} 
	 		\put(9,2){\circle*{.1}}
	 		\put(8.5,2){\circle*{.1}}
	 		\put(8,2){\circle*{.1}} 
	 		\put(9.5,2){\circle*{.1}}

	 		\qbezier (10,2)(11.5,2.25)(13,2)
	 		\qbezier (10,2)(11.5,1.7)(13,2)
	 		\put(13,2){\circle*{.2}}
	 		\put(16,2){\circle*{.2}}
	 		\put(16,4){\circle*{.2}}
	 		\put(12.3,1.2){$n-2$}
	 		\put(15.3,1.2){$n-1$}
	 		\put(15.8,4.2){$n$}
	 		\put(11.4,2.125){\vector(1,0){0.26}}
	 		\put(11.6,1.85){\vector(-1,0){0.26}}
	 		\put(1,2){\line(0,1){2}}
	 		\put(1,4){\vector(0,-1){1.2}}
	 		\qbezier (13,2)(14.5,2.25)(16,2)
	 		\qbezier (13,2)(14.5,1.7)(16,2)
	 		\put(14.4,2.125){\vector(1,0){0.26}}
	 		\put(14.6,1.85){\vector(-1,0){0.26}}
	 		\put(16,2){\line(0,1){2}}
	 		\put(16,2){\vector(0,1){1.2}}
	 		\put(8.3,0.1){$G$}

	 		%============================================================
	 		
	 		\thicklines
	 		\qbezier (1,7)(2.5,7.25)(4,7)
	 		\qbezier (1,7)(2.5,6.7)(4,7)
	 		\put(1,7){\circle*{.2}}
	 		\put(1,9){\circle*{.2}}
	 		\put(4,7){\circle*{.2}}
	 		\put(0.8,9.2){$0$}
	 		\put(0.8,6.2){$1$}
	 		\put(3.8,6.2){$2$}
	 		\thicklines
	 		\put(2.4,7.125){\vector(1,0){0.26}}
	 		\put(2.6,6.85){\vector(-1,0){0.26}}
	 		\put(1,7){\line(0,1){2}}
	 		\put(1,9){\vector(0,-1){1.2}}
	 		
	 		\qbezier (4,7)(5.5,7.25)(7,7)
	 		\qbezier (4,7)(5.5,6.7)(7,7)
	 		\put(7,7){\circle*{.2}}
	 		\put(10,7){\circle*{.2}}
	 		\put(6.8,6.2){$3$}
	 		\put(9.3,6.2){$n-3$}
	 		\put(5.4,7.125){\vector(1,0){0.26}}
	 		\put(5.6,6.85){\vector(-1,0){0.26}}

	 		\put(7.5,7){\circle*{.1}}
	 		\put(8,7){\circle*{.1}} 
	 		\put(9,7){\circle*{.1}}
	 		\put(8.5,7){\circle*{.1}}
	 		\put(8,7){\circle*{.1}} 
	 		\put(9.5,7){\circle*{.1}}

	 		\qbezier (10,7)(11.5,7.25)(13,7)
	 		\qbezier (10,7)(11.5,6.7)(13,7)
	 		\put(13,7){\circle*{.2}}
	 		\put(16,7){\circle*{.2}}
	 		\put(16,9){\circle*{.2}}
	 		\put(12.3,6.2){$n-2$}
	 		\put(15.3,6.2){$n-1$}
	 		\put(15.8,9.2){$n$}
	 		\put(11.4,7.125){\vector(1,0){0.26}}
	 		\put(11.6,6.85){\vector(-1,0){0.26}}
	 		\put(1,2){\line(0,1){2}}
	 		\put(1,4){\vector(0,-1){1.2}}
	 		\qbezier (13,7)(14.5,7.25)(16,7)
	 		\qbezier (13,7)(14.5,6.7)(16,7)
	 		\put(14.4,7.125){\vector(1,0){0.26}}
	 		\put(14.6,6.85){\vector(-1,0){0.26}}
	 		\put(16,7){\line(0,1){2}}
	 		\put(16,9){\vector(0,-1){1.2}}
	 		\put(8.3,5.1){$G^\prime$}

	 		\put(7.5,-1){Figure 1}
	 		
	 		\end{picture}
	 	\end{center}
	 	%\caption{A bi-join in a graph and in a tournament  }
	 \end{figure}

	%%%%%%%%%%%%%%%%%%%%%%%%%%%%%%%%%%%%%%%%%%%%%%%%%%%%%%%%%%%%%%%%%%%%%%%%%%%%%%%%%%%%%%
	\section{Isomorphy and Hemimorphy}
	
	Two digraphs $G=(V,E)$ and $G^{\prime}=(V^{\prime},E^{\prime})$ are said to be
	\emph{isomorphic} if there is a bijection $\varphi$ from $V$ onto $V^{\prime}$
	which preserves arcs, that is $(x,y)\in E$ if and only if $(\varphi
	(x),\varphi(y))\in E^{\prime}$. Any such bijection is called an
	\emph{isomorphism}. We say that $G$ and $G^{\prime}$ are \emph{hemimorphic}%
	,\emph{ }if there exists an isomorphism from $G$ to $G^{\prime}$ or from
	$G^{\ast}$ to $G^{\prime}$.
	
	Let $G$ and $H$ be two digraphs with the same vertex set $V$ of size $n$. Let
	$k\in\left\{  1,\ldots,n\right\}  $, we say that $G$ and $H$ are
	$k$\emph{-hemimorphic }if for every $k$-subset $W$ of $V$, the subdigraphs
	$G\left[  W\right]  $ and $H\left[  W\right]  $ are hemimorphic. More
	generally, let $K$ be a subset of $\{1,\ldots,n\}$, the digraphs $G$ and $H$
	are $K$-\emph{hemimorphic } if they are $k$-hemimorphic for every
	$k\in K$.
	
	Let $G=\left(  V,E\right)  $ be a digraph with at least $5$ vertices. For a
	subset $W$ of $V$ we denot by $\nu\left(  W\right)  $ the number of arcs
	contained in $G\left[  W\right]  $. Let $x_{1}\neq x_{2}\in V$ and let
	$x_{3},x_{4},x_{5}$ be three distinct vertices in $V\setminus\left\{
	x_{1},x_{2}\right\}  $. By applying the inclusion-exclusion principle, we get:  
	$
	6\nu(x_{1},x_{2})=2\nu(
	x_{3},x_{4},x_{5})+\underset{i\in\{3,4,5\}}{%
	%TCIMACRO{\dsum }%
	%BeginExpansion
	{\displaystyle\sum}
	%EndExpansion
	}2\nu(x_{1},x_{2},x_{i})-\underset{i,j\in\{
	3,4,5\}}{%
	%TCIMACRO{\dsum }%
	%BeginExpansion
	{\displaystyle\sum}
	%EndExpansion
	}(\nu(x_{1},x_{i},x_{j})+\nu(x_{2}%
	,x_{i},x_{j}))
	$

	Using this Formula, we deduce the following lemma.
	
	\begin{lemma}
	Two $3$-hemimorphic digraphs with at least five vertices are $2$-hemimorphic.
	\end{lemma}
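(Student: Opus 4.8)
The plan is to reduce the hemimorphy statement to an arc-counting identity and then invoke the inclusion--exclusion formula displayed just above the lemma. Throughout, I write $\nu_G$ and $\nu_H$ for the arc-counting function $\nu$ evaluated in $G$ and in $H$ respectively, and I abbreviate $\nu(\{a,b\})$ by $\nu(a,b)$ as in the formula.

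The first and only substantive observation is that, for digraphs on a two-element vertex set, hemimorphy coincides with equality of arc numbers. Indeed, on a pair $\{x,y\}$ there are, up to isomorphism and taking converses, exactly three possibilities: the empty digraph (zero arcs), the single-arc digraph (one arc, its two forms $(x,y)$ and $(y,x)$ being converses of each other, hence hemimorphic), and the digraph carrying both arcs (two arcs). Therefore $G[\{x_1,x_2\}]$ and $H[\{x_1,x_2\}]$ are hemimorphic if and only if $\nu_G(x_1,x_2)=\nu_H(x_1,x_2)$. I will also use the elementary remark that hemimorphy always preserves the number of arcs: an isomorphism is an arc-preserving bijection, and passing to the converse merely reverses each arc, so neither operation alters the arc count.

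Next I combine these two facts with the hypothesis. If $G$ and $H$ are $3$-hemimorphic, then for every $3$-subset $W$ of $V$ the subdigraphs $G[W]$ and $H[W]$ are hemimorphic, so by the preservation remark $\nu_G(W)=\nu_H(W)$. Now fix an arbitrary pair $x_1\neq x_2$ in $V$. Since $|V|\geq 5$, I can choose three further distinct vertices $x_3,x_4,x_5\in V\setminus\{x_1,x_2\}$; this is precisely where the hypothesis on the size is used. Applying the displayed inclusion--exclusion formula both in $G$ and in $H$, every term on the right-hand side is a $\nu$-value of a $3$-subset and hence agrees for $G$ and $H$. It follows that $6\nu_G(x_1,x_2)=6\nu_H(x_1,x_2)$, so $\nu_G(x_1,x_2)=\nu_H(x_1,x_2)$. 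By the two-vertex equivalence above, $G[\{x_1,x_2\}]$ and $H[\{x_1,x_2\}]$ are hemimorphic, and since the pair was arbitrary, $G$ and $H$ are $2$-hemimorphic.

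There is no real obstacle here: the argument is essentially bookkeeping. The one point that does the work is the equivalence, for two-vertex digraphs, between hemimorphy and equality of arc numbers, which is what allows the purely numerical conclusion $\nu_G(x_1,x_2)=\nu_H(x_1,x_2)$ to be read back as a hemimorphy statement; the role of the hypothesis $|V|\geq 5$ is solely to supply the auxiliary vertices $x_3,x_4,x_5$ required by the formula.
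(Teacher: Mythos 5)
Your proof is correct and is exactly the paper's intended argument: the paper derives the lemma in one line from the displayed inclusion--exclusion identity, and you have simply supplied the routine details (hemimorphy preserves arc counts, and on two vertices equality of arc counts is equivalent to hemimorphy). Nothing is missing.
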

	
	This result \ can also be obtained by applying the Combinatorial Lemma due to
	Pouzet \cite[Theorem~1]{pouzet1976application}.
	
	Let $G=(V,E)$ be a digraph. Following \cite{ehrenfeucht1999theory}, a subset $W$ of $V$ is a
	\emph{module} of $G$ if for any $a,b\in W$ and $x\in V\setminus W$, $(a,x)\in E$
	(resp. $(x,a)\in E)$) if and only if $(b,x)\in E$ (resp. $(x,b)\in E$). For a
	subset $W$ of $V$, we denote by $Inv(W,G)$ the digraph obtained from $G$ by
	reversing all the arcs of $G\left[  W\right]  $. Clearly, if $W$ is a module of $G$
	then $G$ and $Inv(W,G)$ are $\{2,3\}$-hemimorphic. More generally, if there
	exists a sequence $G_{0}=G,\ldots,G_{m}=H$ of digraphs such that for
	$i=0,\ldots,m-1$, $G_{i+1}=Inv(W_{i},G_{i})$ where $W_{i}$ is a module
    of $G_{i}$
	then $G$ and $H$ are $\{2,3\}$-hemimorphic. The converse is true  for flag-free digraphs
	as shown in the following theorem.
	
	\begin{theorem}
	\textup{\cite{boussairi2004c3}}\label{Biltnew} Let $G$ and $H$ be two flag-free digraphs with the
	same vertex set. If $G$ and $H$ are $\{2,3\}$-hemimorphic then there
	exists a sequence $G_{0}=G,\ldots,G_{m}=H$  of flag-free digraphs such
	that for $i=0,\ldots,m-1$, $G_{i+1}=Inv(W_{i},G_{i})$ where $W_{i}$ is a module of
	$G_{i}$.
	\end{theorem}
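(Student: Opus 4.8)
The plan is to argue by induction on $|V|$, using the modular (Gallai) decomposition of $2$-structures \cite{ehrenfeucht1999theory} and treating the construction of the sequence as a reorientation problem. First I would record a clean consequence of $2$-hemimorphy: on a $2$-subset the only hemimorphy classes are ``non-edge'', ``single arc'' and ``digon'', and each class is closed under converse; hence $G$ and $H$ have exactly the same non-edges and exactly the same digons, and $H$ is obtained from $G$ by reversing some set $\Delta$ of single arcs. The whole theorem thus becomes: the reorientation turning $G$ into $H$ can be factored as a composition of module-inversions through flag-free digraphs. A key preliminary observation (the forward direction already used in the text) is that $Inv(W,G)$ preserves the \emph{type} (non-edge/single/digon) of every pair, that whether a triple is a flag depends only on these types and not on the orientation of its unique single arc, and that for $a,b\in W$, $c\notin W$ the transposition $a\leftrightarrow b$ is an isomorphism from $G[\{a,b,c\}]$ onto $Inv(W,G)[\{a,b,c\}]$. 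Consequently each $Inv(W,G)$ with $W$ a module is again flag-free and $\{2,3\}$-hemimorphic to $G$, so every digraph produced by the sequence is automatically admissible; I only have to build the sequence.

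For the inductive step I would first treat the \emph{decomposable} case, where $G$ has a module $M$ with $1<|M|<|V|$. The restrictions $G[M]$ and $H[M]$ are flag-free and $\{2,3\}$-hemimorphic, so by induction there is a sequence of inversions of modules of $G[M]$ carrying $G[M]$ to $H[M]$; since a module of $G[M]$ is also a module of $G$ and an inversion inside $M$ changes no arc meeting $V\setminus M$, this sequence lifts verbatim to transform $G$ into a digraph $G_{1}$ with $G_{1}[M]=H[M]$ and $G_{1}=G$ off $M$. It remains to fix the arcs incident to $M$, which I would handle on the quotients $G_{1}/M$ and $H/M$ on $(V\setminus M)\cup\{\ast\}$: these are smaller, flag-free and $\{2,3\}$-hemimorphic, so induction gives an inversion sequence between them. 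Lifting a quotient-inversion of a part $W^{\ast}$ means inverting $W=(W^{\ast}\setminus\{\ast\})\cup M$ in $G_{1}$; because this also reverses the interior of $M$, I would follow each such step by an inversion of the sub-module $M$ to restore $G_{1}[M]=H[M]$, the net effect being exactly the intended quotient move. Chaining the two phases yields the required sequence from $G$ to $H$.

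The base case is the \emph{prime} (indecomposable) case, where the only modules are singletons and $V$; inverting a singleton does nothing and inverting $V$ produces the converse $G^{\ast}$, so here the theorem reduces to the rigidity statement that a prime flag-free digraph that is $\{2,3\}$-hemimorphic to $H$ satisfies $H=G$ or $H=G^{\ast}$, i.e.\ $\Delta=\varnothing$ or $\Delta$ is the whole single-arc set. I expect this to be the main obstacle. The leverage is that $3$-hemimorphy means every $3$-subset induces the same hemimorphy type in $G$ and in $H$; in particular, since a $3$-cycle and a transitive triple are each closed under converse, $G$ and $H$ have the same $3$-cycles and the same transitive triples, which forces, triple by triple, strong compatibility conditions on which single arcs of a common triple may lie in $\Delta$ (for instance $\Delta$ may not reverse the ``long'' arc of a transitive triple alone, as that would create a $3$-cycle). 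The remaining work is to propagate these local constraints globally: fixing one vertex and one incident single arc, I would show that the orientation of every other single arc relative to that choice is determined, using flag-freeness to transport consistency across digons and using indecomposability to guarantee that the propagation reaches all vertices, with the sole residual degree of freedom being a global reversal. Making this propagation argument airtight---especially the interaction of digons with the single-arc orientations along an indecomposable ``scheme'' of $3$-subsets---is the delicate point, and is exactly where the flag-free hypothesis is indispensable.
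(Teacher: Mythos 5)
The paper does not prove this statement: Theorem \ref{Biltnew} is imported verbatim from \cite{boussairi2004c3}, so there is no in-paper proof to compare yours against. Judged on its own terms, your proposal correctly sets up the standard reduction --- $2$-hemimorphy pins down the non-edges and the digons so that $H$ differs from $G$ only by reversing a set $\Delta$ of single arcs; module inversions preserve flag-freeness and $\{2,3\}$-hemimorphy; and modular (Gallai) decomposition splits the problem into a decomposable inductive step and a prime base case --- but it leaves the actual content of the theorem unproven. The prime case, namely that a prime flag-free digraph that is $\{2,3\}$-hemimorphic to $H$ must satisfy $H=G$ or $H=G^{\ast}$, is precisely the main theorem of \cite{boussairi2004c3}, and your ``propagation of local constraints along an indecomposable scheme of $3$-subsets'' is a statement of intent rather than an argument; you flag it yourself as the delicate point. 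Until that rigidity statement is established, the proposal is an outline, not a proof.

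There is also a concrete gap in the decomposable step. To run the induction on the quotients $G_{1}/M$ and $H/M$ you need $M$ to be a module of $H$ as well as of $G$; $\{2,3\}$-hemimorphy does not give this for free, and showing that two $\{2,3\}$-hemimorphic flag-free digraphs share their nontrivial modules (equivalently, that the modular decomposition is determined by the $\{2,3\}$-data) is itself one of the nontrivial lemmas needed in \cite{boussairi2004c3}. Without it $H/M$ need not be defined and the second phase of your induction cannot start. Both halves of the argument therefore rest on unproved claims that together constitute the substance of the cited theorem.
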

   This theorem is not valid for arbitrary digraphs. It suffices to consider the counterexample used in
   Question \ref{invariant}.

	\section{Proof of the Main Theorem}
	The proof of the main theorem is a direct consequence of Proposition \ref{23key}
	and Lemma \ref{3idio} below.

	\begin{lemma}\label{3idio}
		Let $G$ and $H$ be digraphs with $3$ vertices. Then the following assertions
		are equivalent
		
		\begin{description}
			\item[i)] $G$ and $H$ have the same idiosyncratic polynomial.
			
			\item[ii)] $P_{G}\left(  X\right)  =P_{H}\left(  X\right)  $ and
			$P_{\overline{G}}\left(  X\right)  =P_{\overline{H}}\left(  X\right)  $.
			
			\item[iii)] $G$ and $H$ are hemimorphic.
		\end{description}
	\end{lemma}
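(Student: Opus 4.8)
The plan is to establish the cycle of implications $\textrm{iii})\Rightarrow\textrm{i})\Rightarrow\textrm{ii})\Rightarrow\textrm{iii})$. The first two implications hold for digraphs on any number of vertices; only the last one will use that $G$ and $H$ have exactly three vertices.

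For $\textrm{iii})\Rightarrow\textrm{i})$ I would first record that the idiosyncratic polynomial is an isomorphism invariant: if $A_{H}=PA_{G}P^{-1}$ for a permutation matrix $P$, then, since $PJP^{-1}=J$ and $PIP^{-1}=I$, the generalized adjacency matrices satisfy $A_{H}(y,z)=PA_{G}(y,z)P^{-1}$, so they are similar and share a characteristic polynomial. The extra point is that a digraph and its converse always have the same idiosyncratic polynomial: the generalized adjacency matrix of $G^{\ast}$ is $A^{T}+y(J-A^{T}-I)+zA=A(y,z)^{T}$, and a matrix and its transpose have the same characteristic polynomial. Hence if $G$ and $H$ are hemimorphic, i.e. $G\cong H$ or $G^{\ast}\cong H$, they have the same idiosyncratic polynomial.

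For $\textrm{i})\Rightarrow\textrm{ii})$ the two characteristic polynomials are read off by specialization. Writing $A(y,z)=A+y\overline{A}+zA^{T}$ with $\overline{A}=J-A-I$, setting $y=z=0$ gives $P_{G}(X)$ directly. To recover $P_{\overline{G}}$, I would set $z=0$ and substitute $X\mapsto yX$, obtaining $\det\!\big(y(XI-\overline{A})-A\big)$; as a polynomial in $y$ its top-degree coefficient is $\det(XI-\overline{A})=P_{\overline{G}}(X)$. Thus equality of the idiosyncratic polynomials of $G$ and $H$ forces $P_{G}=P_{H}$ and $P_{\overline{G}}=P_{\overline{H}}$.

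The substance is $\textrm{ii})\Rightarrow\textrm{iii})$, where the three-vertex hypothesis is essential. For a digraph on three vertices the adjacency matrix has zero trace, so $P_{G}(X)=X^{3}-dX-c$, where $d$ is the number of digons (pairs carrying both arcs) and $c$ is the number of directed $3$-cycles; hence $P_{G}$ is exactly the data $(d,c)$ and, applied to the complement, $P_{\overline{G}}$ is the data $(\overline{d},\overline{c})$. I would then enumerate the finitely many digraphs on three vertices up to hemimorphy (there are $13$ such classes, arising from the $16$ isomorphism types once converses are identified) and tabulate the quadruple $(d,c,\overline{d},\overline{c})$ for a representative of each class. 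The main obstacle is purely this bookkeeping: one must check that the resulting $13$ quadruples are pairwise distinct, i.e. that the pair $(P_{G},P_{\overline{G}})$ separates hemimorphy classes. Grouping the classes first by $d$ and $\overline{d}$ and then breaking the remaining ties with $c$ or $\overline{c}$ makes the verification short. Granting this, $P_{G}=P_{H}$ together with $P_{\overline{G}}=P_{\overline{H}}$ places $G$ and $H$ in the same hemimorphy class, so they are hemimorphic, which closes the cycle.
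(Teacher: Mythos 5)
Your proposal is correct, and its overall strategy coincides with the paper's: a finite case analysis over the hemimorphy classes of three-vertex digraphs, checking that the pair $\left(P_{G},P_{\overline{G}}\right)$ separates them. Still, two differences are worth recording. First, you prove $\textrm{i})\Rightarrow\textrm{ii})$ and $\textrm{iii})\Rightarrow\textrm{i})$ explicitly, whereas the paper simply declares them trivial; your device for recovering $P_{\overline{G}}$ (set $z=0$, substitute $X\mapsto yX$, and read off the top coefficient in $y$ of $\det\left(y(XI-\overline{A})-A\right)$) is clean and works in any dimension. Second, and more substantively, your organization of $\textrm{ii})\Rightarrow\textrm{iii})$ via the identity $P_{G}(X)=X^{3}-dX-c$ and the quadruple $(d,c,\overline{d},\overline{c})$ is actually more careful than the paper's enumeration: the paper lists only seven digraphs ``up to hemimorphy and complementation'' ($G_{1},\dots,G_{6}$ and $F$), but this list omits the digon with an isolated vertex and its complement (two digons sharing a vertex), so there are eight such classes and your count of $13$ hemimorphy classes is the correct one. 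The omitted case is not vacuous, since the digon has characteristic polynomial $X^{3}-X=P_{F}(X)$ and is separated from the flag only by the complement polynomial ($X^{3}-2X$ versus $X^{3}-X$). The one soft spot in your write-up is that you defer the tabulation as ``bookkeeping''; it does check out (the $13$ quadruples are pairwise distinct, e.g.\ the five classes with $(d,c)=(0,0)$ have complement data $(3,2),(2,1),(1,1),(1,0),(0,0)$ and the three with $(d,c)=(1,0)$ have $(2,0),(1,0),(0,0)$), but since the whole content of the implication lives in that table, you should write it out in full.
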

	
	\begin{proof}
		Up to hemimorphy and complementation there are exactly seven digraphs with three vertices, $G_{1}%
		,\ldots,G_{6}$ and $F$ (see Figure
		2). By simple computation, we have
		
		\begin{description}
			\item[i)] $P_{G_{1}}\left(  X\right)  =X^{3}$ and $P_{\overline{G_{1}}}\left(
			X\right)  =X^{3}-3X-2$.
			
			\item[ii)] $P_{G_{2}}\left(  X\right)  =X^{3}$ and $P_{\overline{G2}}\left(
			X\right)  =X^{3}-2X-1$.
			
				\item[iii)] $P_{G_{3}}\left(  X\right)  =P_{\overline{G_{3}}}\left(  X\right)
			=X^{3}-1$.
			
			\item[iv)] $P_{G_{4}}\left(  X\right)  =P_{\overline{G_{4}}}\left(  X\right)
			=X^{3}$.
			
			\item[v)] $P_{G_{5}}\left(  X\right)  =X^{3}$ and $P_{\overline{G_{5}}%
			}\left(  X\right)  =X^{3}-X$.

			\item[vi)]$P_{G_{6}}\left(  X\right)  =X^{3}$ and $P_{\overline{G_{6}}%
			}\left(  X\right)  =X^{3}-X-1$.

			\item[vii)] $P_{F}\left(  X\right)  =P_{\overline{F}}\left(  X\right)
			=X^{3}-X$
		\end{description}
		
		It follows from our assumption that up to hemimorphy, $\left(  G,H\right)  $
		is one of the following pairs: $\left(  G_{i},G_{i}\right)  $, $\left(
		\overline{G_{i}},\overline{G_{i}}\right)  $ for some $i\in\left\{
		1,2,5,6\right\}  $, $\left(  G_{3},G_{3}\right)  $, $\left(  G_{4}%
		,G_{4}\right)  $ and $\left(  F,F\right)  $ which complete the proof of
		 the implication $ii)\Longrightarrow iii)$. The implications $i)\Longrightarrow ii)$
		 and $iii)\Longrightarrow i)$ are trivial.

	\end{proof}
 %\bigskip
%\hspace*{4cm}
%%%%%%%%%%%%%%%%%%
\begin{figure}[h]
	\begin{center}
		\setlength{\unitlength}{0.7cm}
		\begin{picture}(30,10.1)
		
		\linethickness{1mm}
		% \qbezier(0,0)(0.5,0.5)(1,0)
		%%%%%%%%%%%%%%%%%%%%%%%%%
		\put(0,6){\circle*{.2}}
		\put(1.5,8){\circle*{.2}}
		\put(3,6){\circle*{.2}}
		\put(1.3,5){${G_{1}}$}
		%%%%%%%%%%%%%%%%%%%
		\put(6,6){\circle*{.2}}
		\put(7.5,8){\circle*{.2}}
		\put(9,6){\circle*{.2}}
		\thicklines
		\put(6,6){\vector(3,4){.9}}
		\put(6,6){\line(3,4){1.5}}
		\put(7.3,5){${G_{2}}$}
		%%%%%%%%%%%%%%%%%%%%%%%%%
		\put(12,6){\circle*{.2}}
		\put(13.5,8){\circle*{.2}}
		\put(15,6){\circle*{.2}}
		\thicklines
		\put(12,6){\line(1,0){3}}
		\put(12,6){\vector(1,0){1.8}}
		\put(12,6){\line(3,4){1.5}}
		\put(13.5,8){\vector(-3,-4){.9}}
		\put(15,6){\line(-3,4){1.5}}
		\put(15,6){\vector(-3,4){.9}}
		\put(13.1,5){$G_{3}$}
		%	  %%%%%%%%%%%%%%%%%%%%%%%%%
		\put(17,6){\circle*{.2}}
		\put(18.5,8){\circle*{.2}}
		\put(20,6){\circle*{.2}}
		\thicklines
		\put(17,6){\line(1,0){3}}
		\put(20,6){\vector(-1,0){1.8}}
		\put(17,6){\line(3,4){1.5}}
		\put(18.5,8){\vector(-3,-4){.9}}
		\put(18.5,8){\line(3,-4){1.5}}
		\put(20,6){\vector(-3,4){.9}}
		\put(18.1,5){$G_{4}$}

		%%%%%%%%%%%%%%%%%%%%%%%%%
		\thicklines
		\qbezier (1,2)(2.5,2.25)(4,2)
		\qbezier (1,2)(2.5,1.7)(4,2)
		\put(1,2){\circle*{.2}}
		\put(2.5,4){\circle*{.2}}
		\put(4,2){\circle*{.2}}
		\thicklines
		%\put(0,2){\line(1,0){3}}
		%\put(0,2){\vector(1,0){1.8}}
		\put(2.4,2.125){\vector(1,0){0.26}}
		\put(2.6,1.85){\vector(-1,0){0.26}}
		%\put(1,2){\line(3,4){1.5}}
		\put(2.502,4){\line(-3,-4){1.5}}
		\put(2.5,4){\vector(-3,-4){.9}}
		%\put(3,2){\line(-3,4){1.5}}
		%\put(3,2){\vector(-3,4){.9}}
		\put(2.3,1){$F$}
		
		%%%%%%%%%%%%%%%%%%%
		
		%%%%%%%%%%%%%%%%%%%%%%%%%
		\put(9,2){\circle*{.2}}
		\put(10.5,4){\circle*{.2}}
		\put(12,2){\circle*{.2}}
		\thicklines
		\put(9,2){\line(1,0){3}}
		\put(9,2){\vector(1,0){1.8}}
		\put(9,2){\line(3,4){1.5}}
		\put(9,2){\vector(3,4){.9}}
		\put(10.1,1){$G_{5}$}
		
		%%%%%%%%%%%%%%%%%%%%%%%%%
		
		\put(16,2){\circle*{.2}}
		\put(17.5,4){\circle*{.2}}
		\put(19,2){\circle*{.2}}
		\thicklines
		\put(16,2){\line(1,0){3}}
		\put(19,2){\vector(-1,0){1.8}}
		\put(16,2){\line(3,4){1.5}}
		\put(16,2){\vector(3,4){.9}}
		
		\put(17.1,1){$G_{6}$}
		
		\put(9,0){Figure 2}
		
		\end{picture}
	\end{center}
	%\caption{A bi-join in a graph and in a tournament  }
\end{figure}

%%%%%%%%%%%%%%%%%%%%%%%%%%%%%%%%%%%%%%%%%%%%%%%%%%%%%%%%%%%%%%%%%%%%%%%%%%%%%%

	\begin{proposition}
	\label{23key}If $G$ and $H$ are $\{2,3\}$-hemimorphic flag-free digraphs then
	they have the same idiosyncratic polynomial.
	\end{proposition}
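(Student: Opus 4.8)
The plan is to reduce the claim to a single module inversion and then to invoke Proposition~\ref{HLowey}. Since $G$ and $H$ are $\{2,3\}$-hemimorphic flag-free digraphs on the same vertex set $V$, Theorem~\ref{Biltnew} furnishes a sequence $G_{0}=G,\ldots,G_{m}=H$ with $G_{i+1}=Inv(W_{i},G_{i})$, where each $W_{i}$ is a module of $G_{i}$. By induction on $m$, it therefore suffices to prove that $G$ and $Inv(W,G)$ have the same idiosyncratic polynomial whenever $W$ is a module of $G$; flag-freeness enters only through this reduction.

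To do this, I would order the vertices of $V$ with those of $U:=V\setminus W$ first and those of $W$ last, and write the adjacency matrix of $G$ in block form
\[
A=\begin{pmatrix} A_{UU} & A_{UW}\\ A_{WU} & A_{WW}\end{pmatrix}.
\]
The defining property of a module says that for each $x\in U$ the arcs between $x$ and $W$ do not depend on the chosen vertex of $W$; concretely $A_{UW}=c\,\mathbbm{1}_{W}^{t}$ and $A_{WU}=\mathbbm{1}_{W}\,r^{t}$, where $\mathbbm{1}_{W}$ is the all-ones vector indexed by $W$ and $c,r\in\{0,1\}^{U}$ record, respectively, which vertices of $U$ dominate and which are dominated by $W$. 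Substituting these into $A(y,z)=A+y(J-A-I)+zA^{T}$ and using that the off-diagonal blocks of $J$ are all-ones matrices, a short computation gives
\[
A(y,z)=\begin{pmatrix} M_{U} & \alpha\beta^{t}\\ \beta\gamma^{t} & M_{W}\end{pmatrix},
\]
where $M_{U}$ and $M_{W}$ are the generalized adjacency matrices of $G[U]$ and $G[W]$, and $\beta=\mathbbm{1}_{W}$, $\alpha=c+y(\mathbbm{1}_{U}-c)+zr$, $\gamma=r+y(\mathbbm{1}_{U}-r)+zc$. The crucial point is that both off-diagonal blocks share the common factor $\beta=\mathbbm{1}_{W}$, which is exactly the rank-one shape required by Proposition~\ref{HLowey}.

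Passing from $G$ to $Inv(W,G)$ reverses only the arcs inside $G[W]$, so it replaces $A_{WW}$ by $A_{WW}^{T}$ and leaves $A_{UU}$, $A_{UW}$, $A_{WU}$ untouched. Consequently $M_{U}$ and the two off-diagonal blocks are unchanged, while the block $M_{W}$ is replaced by the generalized adjacency matrix of the converse $(G[W])^{\ast}$; since $J$ and $I$ restricted to $W$ are symmetric, this latter matrix is precisely $M_{W}^{T}$. Thus the generalized adjacency matrix of $Inv(W,G)$ is obtained from that of $G$ by transposing the lower-right block only, and Proposition~\ref{HLowey} (with $A_{11}=M_{U}$ and $A_{22}=M_{W}$) yields that the two matrices have the same characteristic polynomial, that is, $G$ and $Inv(W,G)$ have the same idiosyncratic polynomial.

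The main thing to verify carefully is the structural computation in the second and third paragraphs: that the module hypothesis forces both off-diagonal blocks of $A(y,z)$ to be rank-one with the common factor $\mathbbm{1}_{W}$ (here the interaction of the module condition with the $zA^{T}$ term must be tracked, since it feeds the transposed off-diagonal blocks back in), and that inverting the module transposes the $W$-block of the generalized adjacency matrix exactly, using the symmetry of the diagonal and all-ones blocks restricted to $W$. Once these identities are in place, Theorem~\ref{Biltnew} and Proposition~\ref{HLowey} immediately finish the argument.
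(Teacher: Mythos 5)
Your proposal is correct and follows essentially the same route as the paper: reduce via Theorem~\ref{Biltnew} to a single module inversion, observe that the module condition forces the generalized adjacency matrices of $G$ and $Inv(W,G)$ into the block form of Proposition~\ref{HLowey} with $\beta=\mathbbm{1}$, and conclude. You merely spell out the rank-one structure of the off-diagonal blocks (including the effect of the $zA^{T}$ term) that the paper leaves implicit.
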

	
	\begin{proof}
	By Theorem \ref{Biltnew}, it suffices to prove that if $W$ is a module of $G$
	then $G$ and $Inv(W,G)$ have the same idiosyncratic polynomial. Let
	$V=\left\{  v_{1},\ldots,v_{n}\right\}  $ the common vertex set of $G$ and
	$Inv(W,G)$. Without loss of generality, we can assume that $W=\left\{
	v_{k+1},\ldots,v_{n}\right\}$. The generalized adjacency matrices of $G$ and
	$Inv(W,G)$ are as in Proposition \ref{HLowey} where $\beta:=\mathbbm{1}$ is the column vector of
	$1$'s and hence they have the same idiosyncratic polynomial.
	\end{proof}

	\section{Discussion and concluding remarks}
	Corollary \ref{maincas2} is trivial for 
	bipartite connected graphs. Indeed, a bipartite connected graph $G$  has exactly two transitive orientations $G^{\sigma}$
	 and $G^{\tau}$. More precisely, if  $W_1$ and $W_2$ is a bipartition of $G$, then $G^{\sigma}$
	(resp. $G^{\tau}$) 
	 is obtained by orienting all the edges of $G$ from $W_1$ to $W_2$ (resp. $W_2$ to $W_1$).
	  These orientations are known under the name 
	'\emph{canonical orientations}' \cite{anuradha2013skew}. They are the same idiosyncratic polynomial 
	because the generalized adjacency matrix of 
	one is the transpose of the other.

	 %We will prove that the idiosyncratic polynomial
	%of these orientations can be expressed in term of the characteristic polynomial of $G$. 
    Let $G$ be a bipartite graph with $n$ vertices. The adjacency matrix of $G$ has the form 
      \[
     A=\left(
     \begin{tabular}
     [c]{c|c}%
     $0$ & $U$\\\hline
     $U^{T}$ & $0$%
     \end{tabular}
     \right)
     \]
	The adjacency matrices of the canonical orientations $G^{\sigma}$
	and $G^{\tau}$ are respectively
	
	 \[
	A_1=\left(
	\begin{tabular}
	[c]{c|c}%
	$0$ & $U$\\\hline
	$0$ & $0$%
	\end{tabular}
	\right)
	\mbox{, }
	A_2=\left(
	\begin{tabular}
	[c]{c|c}%
	$0$ & $0$\\\hline
	$U^{T}$ & $0$%
	\end{tabular}
	\right)
	\]
	Consider the Seidel adjacency matrices $S_1$ and $S_2$ of $G^{\sigma}$
	and $G^{\tau}$, that is $S_1=A_1-A_1^T$ and $S_2=A_2-A_2^T$. These matrices have the same
	characteristic polynomial $Q(X)$ because $S_2=S_1^T$.
	
	Let 
	 $
	 D=\left(
	 \begin{tabular}
	 [c]{c|c}%
	 $I$ & $0$\\\hline
	 $0$ & $iI$%
	 \end{tabular}
	 \right)
	 $.
	 It is not difficult to see that $S_1=iDAD^{-1}$. Then  $Q(X)=i^nP(-iX)$
	  where $P$ is the characteristic polynomial of $A$. Two questions arise from this fact.
	  
	  \begin{question}
	  	Let $G$ be a bipartite graph and let $G^{\sigma}$ be a canonical orientation of 
	  	$G$. Is it true that the idiosyncratic polynomial of $G^{\sigma}$ can be expressed
	  	in term of the characteristic polynomial of $G$?
	  \end{question}
	  
	  \begin{question}
	  Let $G^{\sigma}$ be a transitive orientation of a comparability graph $G$.
	  We denote by $A$ the adjacency matrix of $G$ and by $S$ the Seidel adjacency 
	  matrix of $G^{\sigma}$. Is it true that the characteristic polynomial of
	  $S$ can be expressed in term of the characteristic polynomial of $A$?
	  \end{question}
	  
	  Both of these questions have a negative answer. For the first, consider  
	    the smallest pair $(G,H)$ of cospectral graphs. Let 
	     $G^{\sigma}$ (resp. $H^{\tau}$) the transitive orientation of $G$ (resp. $H$) (see figure 3).
	    The idiosyncratic polynomials of $G^{\sigma}$ and $H^{\tau}$ are respectively 
	    $(X+y) ^{3}(X^{2}-3yX-4y-4z) $ and $\left(  X+y\right)  ^{2}\left( X^{3} -3Xy^{2}-2X^{2}y-4y^{2}%
	    z-4Xy-4Xz-4y^{2}\right) $. The difference
	     between the two polynomials is $4yz(y-1)(X+y)^2$. For the second question, consider the
	      posets $P_1$ and $P_2$ drawn below. The comparability graphs of these posets 
	     have the same characteristic 
	       polynomial $X^{7}-9X^{5}-8X^{4}+8X^{3}+8X^{2}$. However, 
	       the  characteristic polynomials of the Seidel adjacency matrices 
	       of $P_1$ and $P_2$ are respectively $X^{7}+9X^{5}+8X^{3}$ 
	       and $X^{7}+9X^{5}+12X^{3}$. The second counterexample was found using SageMath.
	      
	      \begin{figure}[h]
	     	\begin{center}
	     		\setlength{\unitlength}{0.7cm}
	     		\begin{picture}(30,11.1)
	     		
	     		\linethickness{0.3mm}
	     		% \qbezier(0,0)(0.5,0.5)(1,0)
	     		%%%%%%%%%%%%%%%%%%%%%%%%%
	     		\thicklines

	     		\put(7.8,6.4){2}
	     		\put(4,7){\circle*{.2}}
	     		\put(8,7){\circle*{.2}}
	     		\put(8,9){\circle*{.2}}
	     		\put(4,9){\circle*{.2}}
	     		\put(3.8,6.4){1}

	     		%\put(8,6){\line(1,1){2}}

	     		\put(5,0){$P_2$}
	     		\put(8.3,5.8){$P_1$}
	     		\put(17.7,5.1){$H$}

	     		\thicklines
	     		\put(0,2){\circle*{.2}}
	     		\put(0,2){\vector(0,1){1.3}}
	     		\put(0,2){\vector(1,0){2.4}}
	     		\put(-0.2,1.2){2}
	     		\put(0,4){\circle*{.2}}
	     		\put(-0.2,4.2){1}
	     		\put(4,2){\circle*{.2}}
	     		
	     		\put(4,4){\circle*{.2}}
	     		
	     		\put(3.8,4.2){6}
	     		\put(4,4){\vector(0,-1){1.3}}
	     		\put(4,4){\vector(-1,0){2.4}}
	     		\put(8,2){\circle*{.2}}
	     		\put(7.8,1.4){4}
	     		\put(10,3){\circle*{.2}}
	     		\put(9.9,3.2){7}
	     		
	     		\put(3.8,1.2){3}
	     		\put(8,4){\circle*{.2}}
	     		\put(7.9,4.2){5}
	     		\put(8,2){\vector(0,1){1.3}}
	     		
	     		\put(4,4){\vector(2,-1){2.8}}
	     		\put(4,4){\vector(1,0){2.3}}
	     		\put(8,4){\vector(-2,-1){2.8}}
	     		\put(8,2){\vector(-2,0){2.3}}
	     		\put(8,7){\vector(-1,0){2.3}}
	     		\put(8,7){\vector(0,1){1.3}}
	     		\put(4,7){\vector(2,1){2.8}}
	     		\put(6,8){\circle*{.2}}
	     		\put(5.9,8.2){5}
	     		\put(11,7){\circle*{.2}}

	     		%%%%%%%%%%%%%%%%%%%%%%
	     		\put(20,6){\circle*{.2}}
	     		\put(19.8,5.4){2}
	     		\put(15.8,5.4){4}
	     		\put(15.8,9.2){1}
	     		\put(19.9,9.2){3}
	     		\put(18,7.7){5}
	     		\put(18,7.5){\circle*{.2}}
	     		\put(20,9){\circle*{.2}}
	     		\put(16,9){\circle*{.2}}
	     		\put(16,6){\circle*{.2}}
	     		\put(16,6){\line(1,0){4}}
	     		\put(16,9){\line(0,-1){3}}
	     		\put(16,9){\line(1,0){4}}
	     		\put(20,9){\vector(-1,0){2}}
	     		\put(20,9){\vector(0,-1){1.8}}
	     		\put(20,9){\line(0,-1){3}}
	     		\put(16,6){\line(0,1){2}}
	     		\put(16,6){\vector(1,0){2}}
	     		\put(16,6){\vector(0,1){1.8}}
	     		%%%%%%%%%%%%%%%%%%%%

	     		%%%%%%%%%%%%%%%%%%%%%%%%%%%%%%%%%%%%%%%%%%%%%%%%%%%%%%%%%%%%%%%%%%%
	     		\put(17,3){\circle*{.2}}
	     		\put(19.5,3){\circle*{.2}}
	     		\put(17,0.7){\circle*{.2}}
	     		%\put(16,1){\line(1,0){2}}
	     		\put(17,3){\line(0,-1){2.4}}
	     		\put(17,3){\line(1,0){2.4}}
	     		\put(17,3){\line(1,-1){2}}
	     		\put(17,3){\line(-1,-1){2}}
	     		
	     		\put(19,1){\circle*{.2}}
	     		
	     		\put(15,1){\circle*{.2}}
	     		\put(17.8,0){$G$}
	     		\put(19.5,3){\vector(-1,0){1.4}}
	     		\put(17,0.7){\vector(0,1){1.4}}
	     		\put(19,1){\vector(-1,1){1.4}}
	     		\put(15,1){\vector(1,1){1.4}}
	     		\put(16.8,3.2){1}
	     		\put(14.7,0.4){2}
	     		\put(16.8,0.1){3}
	     		\put(18.9,0.4){4}
	     		\put(19.6,3){5}
	     		\put(8,-1.3){Figure $3$.}
	     	
	     		%%%%%%%%%%%%%%%%%%%%%%%%%%%%%%%%%%%%%%%%%%%%%%%%%%%%%%%%%%%%%%%%%%

	     		\put(10.8,6.4){6}
	     		\put(11,9){\circle*{.2}}
	     		\put(11,9.2){7}
	     		\put(8,7){\vector(-2,1){2.9}}
	     		\put(8,9){\vector(-2,-1){3}}
	     		\put(4,9){\vector(2,-1){3}}
	     		\put(4,9){\vector(1,0){2}}

	     		\put(3.8,9.2){4}
	     		\put(7.9,9.2){3}
	     		\put(4,9){\vector(0,-1){1.4}}

	     		\put(11,7){\line(0,1){2}}
	     		\put(11,7){\vector(0,1){1.4}}
	     		\put(8,7){\line(-2,1){4}}
	     		\put(0,2){\line(1,0){2}}
	     		\put(0,2){\line(0,1){2}}
	     		\put(2,2){\line(1,0){2}}
	     		\put(4,2){\line(1,0){4}}
	     		\put(4,2){\line(2,1){4}}
	     		\put(8,2){\line(0,1){2}}
	     		\put(8,4){\line(-1,0){4}}
	     		\put(4,7){\line(2,1){4}}
	     		
	     		\put(0,4){\line(1,0){4}}
	     		
	     		\put(4,4){\line(0,-1){2}}
	     		\put(8,2){\line(-2,1){4}}
	     		\put(4,9){\line(1,0){4}}
	     		\put(4,7){\line(0,1){2}}
	     		\put(4,7){\line(1,0){4}}
	     		%\put(4,9){\line(1,0){4}}
	     		\put(8,7){\line(0,1){2}}

	     		%	            
	     		%%%%%%%%%%%%%%%%%%%
	     		
	     		%\thicklines
	     		%\put(6,6){\vector(3,4){.9}}
	     		%\put(6,6){\line(3,4){1.5}}
	     		%\put(7.3,5){${G_{2}}$}
	     		%%%%%%%%%%%%%%%%%%%%%%%%%
	     		\end{picture}
	     	\end{center}
	     	%\caption{A bi-join in a graph and in a tournament  }
	     \end{figure}

	\bibliography{bibpaper}
	\end{document}